\numberwithin{equation}{section}
\newtheorem{Theorem}{Theorem}[section]
\newtheorem{Corollary}[Theorem]{Corollary}
\newtheorem{Lemma}[Theorem]{Lemma}
\newtheorem{Proposition}[Theorem]{Proposition}
 { \theoremstyle{definition}
\newtheorem{Remark}[Theorem]{Remark} }
\renewcommand{\Re}{\mathop {\mathrm {Re}}\nolimits}
\newcommand{\res}{\mathop {\mathrm {res}}\nolimits}
\newcommand{\hyp}[2]{\fourIdx{}{#1}{}{\!#2}F}
\newcommand{\hypc}[2]{\fourIdx{}{#1}{\C}{\!#2}F}
\newcommand{\Kc}[2]{\fourIdx{}{#1}{\C}{\!#2}{\EuScript K}}
\newcommand{\Gc}[2]{\fourIdx{}{#1}{\C}{\!#2}G}
\def\ov{\overline}
\def\wt{\widetilde}
\renewcommand{\Re}{\mathop {\mathrm {Re}}\nolimits}
\renewcommand{\Im}{\mathop {\mathrm {Im}}\nolimits}
\newcommand{\sgn}{\mathop {\mathrm {sgn}}\nolimits}
\def\bfa{\mathbf a}
\def\R {{\mathbb R }}
 \def\C {{\mathbb C }}
 \def\Z{{\mathbb Z}}
\def\N{{\mathbb N}}
\def\T{\mathbb T}
\def\cD{\EuScript D}
\def\cL{\mathscr L}
\def\cK{\EuScript K}
\def\cM{\EuScript M}
\def\kappa{\varkappa}
\def\epsilon{\varepsilon}
\def\phi{\varphi}
\def\le{\leqslant}
\def\ge{\geqslant}
\def\B{\mathrm B}
\def\vp{^{\vphantom \C}}
\def\G{\vp G}
\newcommand{\dd}[1]{\,{\rm d}\,{\overline{\overline{#1}}} }
\def\lambdA{{\boldsymbol{\lambda}}}
\def\mU{{\boldsymbol{\mu}}}
\def\sigmA{{\boldsymbol{\sigma}}}
\def\1{\boldsymbol{1}}
\def\2{\boldsymbol{2}}
\def\SL{\mathrm{SL}}
\def\SO{\mathrm{SO}}
\def\B{\mathrm{B}}
\begin{document}
\allowdisplaybreaks

\newcommand{\arXivNumber}{1910.10686}

\renewcommand{\thefootnote}{}

\renewcommand{\PaperNumber}{072}

\FirstPageHeading
	
\ShortArticleName{Barnes--Ismagilov Integrals and Hypergeometric Functions of the Complex Field}
	
\ArticleName{Barnes--Ismagilov Integrals and Hypergeometric\\ Functions of the Complex Field\footnote{This paper is a~contribution to the Special Issue on Elliptic Integrable Systems, Special Functions and Quantum Field Theory. The full collection is available at \href{https://www.emis.de/journals/SIGMA/elliptic-integrable-systems.html}{https://www.emis.de/journals/SIGMA/elliptic-integrable-systems.html}}}

\Author{Yury A.~NERETIN~$^{\dag^1\dag^2\dag^3\dag^4}$}

\AuthorNameForHeading{Yu.A.~Neretin}

\Address{$^{\dag^1}$~Wolfgang Pauli Institut, c/o Fakult\"at f\"ur Mathematik, Universit\"at Wien,\\
\hphantom{$^{\dag^1}$}~Oskar-Morgenstern-Platz 1, A-1090 Wien, Austria}
\EmailDD{\href{mailto:yurii.neretin@univie.ac.at}{yurii.neretin@univie.ac.at}}
\URLaddressDD{\url{http://mat.univie.ac.at/~neretin/}}
\Address{$^{\dag^2}$~Institute for Theoretical and Experimental Physics, Moscow, Russia}
\Address{$^{\dag^3}$~Faculty of Mechanics and Mathematics, Lomonosov Moscow State University, Russia}
\Address{$^{\dag^4}$~Institute for Information Transmission Problems, Moscow, Russia}

\ArticleDates{Received April 09, 2020, in final form July 17, 2020; Published online August 02, 2020}

\Abstract{We examine a family ${}_pG_{q}^{\mathbb C}\big[\genfrac{}{}{0pt}{}{(a)}{(b)};z\big]$ of integrals of Mellin--Barnes type over the space ${\mathbb Z}\times {\mathbb R}$, such functions~$G$ naturally arise in representation theory of the Lorentz group. We express ${}_pG_{q}^{\mathbb C}(z)$ as quadratic expressions in the generalized hypergeometric func\-tions~${}_{p}F_{q-1}$ and discuss further properties of the functions ${}_pG_{q}^{\mathbb C}(z)$.}

\Keywords{Mellin--Barnes integrals; Mellin transform; hypergeometric functions; Lorentz group}

\Classification{33C20; 33C70; 22E43}

\renewcommand{\thefootnote}{\arabic{footnote}}
\setcounter{footnote}{0}

\section{The statements}

\subsection{Introduction}
Recall the Euler integral representation of the Gauss hypergeometric function:
\begin{equation}
\hyp{2}{1}\left[\genfrac{}{}{0pt}{}{a,b}{c};z\right]
=\frac{1}{\B(b,c-b)}\int_0^1 t^{b-1} (1-t)^{c-b-1} (1-zt)^{-a}\,{\rm d}t,
\label{eq:F21}
\end{equation}
where $a$, $b$, $c$ are complex numbers and
\begin{equation}
 \B(\alpha,\beta):=\int_0^1 t^{\alpha-1}(1-t)^{\beta-1}\,{\rm d}t= \frac{\Gamma(\alpha)\Gamma(\beta)}{\Gamma(\alpha+\beta)},
 \label{eq:beta-real}
\end{equation}
is the beta function. The hypergeometric functions $_{p+1}F_p$ admit the following inductive integral representation:
\begin{gather}
\hyp{p+1}{p}\left[\genfrac{}{}{0pt}{}{a_1,\dots,a_p,c}
{b_1,\dots,b_{p-1},d};z\right] :=
\frac{1}{\B(c,d-c)}\!\int_0^1\! t^{c-1} (1-t)^{d-c-1}
\hyp{p}{p-1}\!\left[\genfrac{}{}{0pt}{}{a_1,\dots,a_p}
{b_1,\dots,b_{p-1}};tz\right] {\rm d}t.\!\!\!\!\!
\label{eq:inductive}
\end{gather}

Let us replace the integration over the interval $[0,1]$ by the
integration over the complex plane for all integrals \eqref{eq:F21}--\eqref{eq:inductive}.
The expression $t^b$ for $t\in\C$, $b\in\C$ is ramified at $t=0$,
so the integration makes no sense.
But we can replace $t^b$ by $|t|^{2b}=t^b \ov t^b$.
It is better to consider a wider generality and to write
\[
t^{b|b'}:=t^b \ov t^{b'}.
\]
This expression is non-ramified if $b-b'\in\Z$.
Then the new beta-function equals
(see \cite[Section~II.3.7]{GGV}, \cite[Section~1.6]{GGR})\footnote{To evaluate
this integral, we can pass to polar coordinates
$t=r{\rm e}^{{\rm i}\phi}$. Integrating in $\phi$ we get a piece-wise $_2F_1$-expression
in $r$ with a singularity at $r=1$. It remains to apply the Slater theorem,
see \cite[Theorem~4.8.1, namely formula~(4.8.7)]{Sla}.
On the other hand, we can reduce \eqref{eq:beta} to the gamma-function
\eqref{eq:Gamma} in the usual way (see Jacobi's proof of the
beta-integral \eqref{eq:beta-real} in \cite[Section~I.1]{AAR}),
but a justification of a changing of integration order is unexpectedly
tedious.}
\begin{align}
\B^\C(\alpha|\alpha',\beta|\beta')&:=\frac 1\pi
\int_\C t^{\alpha-1|\alpha'-1} (1-t)^{\beta-1|\beta'-1}\, {\rm d}\Re t\,{\rm d}\Im t \notag\\
&\hphantom{:}=
\frac{\Gamma(\alpha)\Gamma(\beta)}{\Gamma(\alpha+\beta)}\cdot
\frac{\Gamma(1-\alpha'-\beta')}{\Gamma(1-\alpha')\Gamma(1-\beta')}.\label{eq:beta}
\end{align}
The new version of the Gauss hypergeometric function
(\textit{Gauss hypergeometric functions of the complex field})
also can be evaluated (see \cite[Theorem~3.9]{MN}),
the result has the form
\begin{gather}
\hypc{2}{1}\left[\genfrac{}{}{0pt}{}{a|a',b|b'}{c|c'};z\right]
=\hyp{2}{1}\left[\genfrac{}{}{0pt}{}{a,b}{c};z\right]
\hyp{2}{1}\left[\genfrac{}{}{0pt}{}{a',b'}{c'};\ov z\right] \nonumber\\
{}+
\left\{ \begin{matrix}
\text{product of}\\
\text{$\Gamma$-functions}
\end{matrix}
\right\} z^{1-c|1-c'}
\hyp{2}{1}\!\left[\genfrac{}{}{0pt}{}{a+1-c,b+1-c}{2-c};z\right]
\hyp{2}{1}\!\left[\genfrac{}{}{0pt}{}{a'+1-c',b'+1-c'}{2-c'};\ov z\right].\!\!\!\label{u}
\end{gather}

The Gauss hypergeometric functions satisfy numerous identities,
see for instance the book \textit{Higher Hypergeometric Functions}~\cite[Chapter~2]{HTF1}.
Usually such identities (and maybe all such identities) have counterparts
for~$_2F_1^\C$ (see a collection of formulas in \cite[Section~3]{MN}).

Counterparts of the Gauss hypergeometric functions were briefly mentioned
in the book by Gelfand, Graev and Vilenkin \cite[Section~II.3.7]{GGV} in 1962.
Later various formulas and identities related to hypergeometric functions
of complex fields of different levels and beta integrals of the complex field
appeared in works of numerous authors: Dotsenko and Fateev~\cite{DF},
Gelfand, Graev and Retakh~\cite{GGR},
Ismagilov \cite{Ism1,Ism2},
Derkachov and Manashov~\cite{DM1,DM2},
Bazhanov, Mangazeev and Sergeev \cite{BMS},
Derkachov, Manashov, Valinevich \cite{DMV1,DMV2},
Kels \cite{Kel1,Kel2},
Mimachi \cite{Mim},
Derkachov and Spiridonov \cite{DS},
Molchanov and Neretin \cite{MN},
Neretin \cite{Ner-doug}
(we discuss the references in a~more arranged form in
Section~\ref{ss:generalities}).

The purpose of this paper is to give a definition of counterparts
of higher hypergeometric functions $_pF_q(z)$ (formula \eqref{eq:def})
and to obtain an analog of formula \eqref{u},
see Theorem~\ref{th:1}.
The main statements are contained in the present section,
their proofs are done in Section~\ref{section2}.
In Section~\ref{section3} we discuss further properties of our functions.

\subsection{Notation}
Denote by $\Z_+$ the set of integers $\ge 0$, by $\Z_-$ the set of integers $\le 0$.

\subsubsection{Hypergeometric functions.}
Let $r\le s+1$.
Let $a_1,\dots,a_r\in\C$ and $b_1,\dots,b_s\in\C\setminus\Z_-$.
Generalized hypergeometric functions are defined by
\begin{equation}
\hyp{r}{s}\left[\genfrac{}{}{0pt}{}{a_1,\dots,a_r}{b_1,\dots,b_r};z\right]
:=\sum_{m=0}^\infty
\frac{(a_1)_m\cdots (a_r)_m}{m! (b_1)_m\cdots (b_s)_m}\cdot z^n,
\label{eq:rFs}
\end{equation}
where $(a)_m:=a(a+1)\cdots(a+m-1)$ is the Pochhammer symbol.
For $r\le s$ the radius of convergence is $\infty$, for $r=s+1$ it is $1$.
For $r>s+1$ the series diverges.

\subsubsection{Notation for lists}
Let $a_1, \dots, a_p\in \C$ and $h\in\C$.
We use the following notation for lists,
\begin{gather*}
(a):= a_1, \dots, a_p;\\
(a)+h:= a_1+h, \dots, a_p+h;\\
(a)_{\setminus j}:= a_1,\dots,a_{j-1},a_{j+1},\dots,a_p.
\end{gather*}
In particular, we denote \eqref{eq:rFs}
by $\hyp{r}{s}\big[\genfrac{}{}{0pt}{}{(a)}{(b)};z\big]$.

\subsubsection{Double powers}
Denote by $\C^\times$ (resp.\ $\R^\times_+$) the multiplicative group of $\C$
(resp.\ the multiplicative group of positive reals).
By $\T\subset \C^\times$ we denote the subgroup
$|z|=1$, we have $\T\simeq\R/2\pi \Z$,
\begin{equation*}
\C^\times= \T\times \R^\times_+,
\end{equation*}
and $\R^\times_+$ is isomorphic to the additive group $\R$ of real numbers.

The dual group $(\C^\times)^*$, i.e., the group of all homomorphisms
$\C^\times\to \T$, is
\begin{equation*}
(\C^\times)^*= (\T)^*\times (\R^\times_+)^*\simeq \Z\times \R.
\end{equation*}

Denote by $\Lambda_\C$ the set of pairs
$a\big| a'$ such that $a$, $a'\in\C$ and $a-a'\in\Z$.
We write such pairs by
\[
a\big|a'=\tfrac{k+\sigma}2\big|\tfrac{-k+\sigma}2,
\qquad\text{where $k\in\Z$, $\sigma\in\C$}.
\]
By $\Lambda\subset \Lambda_\C$ we denote the subset
consisting of $a|a'$ satisfying $a'+\ov a=0$,
\[
a\big|a'=
\tfrac{k+{\rm i}s}2\big|\tfrac{-k+{\rm i}s}2\in \Lambda,\qquad\text{where $s\in \R$}.
\]

Define the following functions on $\C^\times$:
\[
z^{a|a'}:= z^a \ov z^{\,a'}:= (z/\ov z)^{k/2} |z|^{\sigma}.
\]
Notice that such functions are precisely all homomorphisms
$\C^\times\to \C^\times$.
For $a|a'\in \Lambda$ we get homomorphisms $\C^\times\to \T$,
\[
\Big|z^{\tfrac{k+{\rm i}s}2\big|\tfrac{-k+{\rm i}s}2} \Big|=1.
\]

\subsubsection{The complex Mellin transform}
Denote by $\dd t$ the Lebesgue measure on $\C$,
\[
\dd t:= {\rm d}\Re t\, {\rm d}\Im t.
\]

We define the complex Mellin transform $\cM$ as the Fourier transform on the group $\C^\times$.
For a~function $f$ on $\C^\times$ we define the Mellin transform as a function
on $\Lambda_\C$ defined by
\[
g\big(a\big|a'\big)=
\cM f\big(a\big|a'\big):=\frac 1{2\pi}
\int_{\C^\times} t^{a|a'} f(t)\frac {\dd t}{t^{1|1}},
\]
the factor ${\dd t}/{t^{1|1}}$ is the $\C^\times$-invariant measure on $\C^\times$.

The Mellin transform is a unitary operator from
$L^2\big(\C^\times,{\dd t}/{t^{1|1}} \big)$
to $L^2$ on $\Lambda\simeq \Z\times\R$, the inversion formula is
\begin{equation*}
f(t)=\frac 1{2\pi}\sum_{k\in\Z} \int_\R g\big(\tfrac{k+{\rm i}s}2\big|\tfrac{-k+{\rm i}s}2\big) t^{{\tfrac{-k-{\rm i}s}2\big|\tfrac{k-{\rm i}s}2} }\,{\rm d}s.
\end{equation*}

The convolution on the group
$\C^\times$ is defined by formula
\[ f*g(z):=\int_\C f(t)g(z/t)\,\frac{\dd t}{t^{1|1}}.
\]
The Mellin transform sends convolutions to products,
\[
\cM (f*g)(z)=2\pi\, \cM f(z)\cdot \cM g(z).
\]

\subsection{The Gamma-function of the complex field}\label{ss:Gamma}
Following Gelfand, Graev and Retakh \cite{GGR},
we define the Gamma-function of the complex field as
a function on $\Lambda_\C$ by\footnote{The definition slightly
differs from a definition from \cite{GGR}, which was used in
\cite{MN,Ner-doug}. We write $\Im t$ instead of $\Re t$. For this reason
the factor $i^{a-a'}$ from \cite{GGR,MN} disappears.}
\begin{align}
\Gamma^\C\big(a\big|a'\big)&:=
\frac 1\pi \int_\C t^{a-1|a'-1} {\rm e}^{2{\rm i} \Im t}\dd ta \notag\\
&\hphantom{:}=
\frac{1}{\pi} \lim_{r\to\infty} \int_{|t|\le r} t^{a-1|a'-1}
{\rm e}^{2{\rm i} \Im t}\dd t
= \frac{\Gamma(a)}{\Gamma(1-a')}
=\frac{(-1)^{a-a'}\Gamma(a')}{\Gamma(1-a)} \notag \\
&\hphantom{:}= \frac{1}{\pi}\,
\Gamma(a)\Gamma(a')\sin\pi a'
= \frac{(-1)^{a-a'}}{\pi} \,\Gamma(a)\Gamma(a')\sin\pi a. \label{eq:Gamma}
\end{align}
The integral conditionally converges if $0<\Re(a+a')<1$
and diverges otherwise. Clearly, the right hand side is meromorphic
in the whole $\Lambda_\C$.

\begin{Remark} In particular, we can write \eqref{eq:beta} as
\[
\B^\C(\alpha|\alpha',\beta|\beta')=
\frac{\Gamma^\C(\alpha|\alpha')\,\Gamma^\C(\beta|\beta')}
{\Gamma^\C(\alpha+\beta|\alpha'+\beta')}.
\]
\end{Remark}

It is easy to see that
\begin{subequations}
\begin{gather}
\Gamma^\C\big(a|a'\big)=(-1)^{a-a} \Gamma^\C\big(a'|a\big),\label{eq:symmetry} \\
\Gamma^\C\big(a|a'\big)\, \Gamma^\C\big(1-a|1-a'\big) =(-1)^{a-a'},\label{eq:gamma-complement} \\
\Gamma^\C\big(a+m\big|a'+m'\big) =(-1)^{m'} \Gamma^\C\big(a\big|a'\big)(a)_m(a')_{m'}, \label{eq:shift1} \\
 \Gamma^\C\big(a-m\big|a'-m'\big) =\frac{(-1)^{m} \Gamma^\C\big(a\big|a'\big)}{(1-a)_m(1-a')_{m'}}, \label{eq:shift2} \\
\prod_{j=0}^{k-1}\Gamma^\C\big(a+\tfrac jk\big|a'+\tfrac jk \big)=\Gamma^\C\big(ka\big|ka'\big)k^{1-(a+a')k},\label{eq:gamma-gauss}
\end{gather}
\end{subequations}
where $m\in\N$, $k\in\N$.
Values of $\Gamma^\C$ at integer points are
\begin{gather*}
 \Gamma^\C(k|l) = 0,\qquad \text{for $k$, $l\in\N$;} \\
 \Gamma(m|-k) =\frac{(m-1)!(-1)^k}{k!},\qquad \text{for $m\in \N$, $k\in \Z_+$}.
\end{gather*}
For $m$, $m'\in\Z_+$ we have a pole at $(-m)|(-m')$, more precisely,
\begin{equation}
\res\limits_{\epsilon=0}
\Gamma^\C(-m+\epsilon|-m'+\epsilon)=\frac{(-1)^m}{m!\, m'!}.
\label{eq:res-gamma}
\end{equation}
Next (see Section~\ref{ss:asy} below),
for $a|a'\in\Lambda_\C$ and $\xi=\frac12(k+{\rm i}s)$
with $k\in\Z$, $s\in \R$ we have
\begin{equation}
\Gamma^\C\big(a+\xi|a'-\ov \xi\big)=
\exp\big\{2{\rm i}\Im (\xi\ln \xi-\xi)\big\}\cdot \xi^{a-\tfrac12\big|a'-\tfrac12} \big(1+O\big(|\xi|^{-1}\big)\big).
\label{eq:asy1}
\end{equation}
Therefore,
\begin{equation}
\big| \Gamma^\C\big(a+\xi|a'-\ov \xi\big)\big|\sim |\xi|^{\Re(a+a')-1}.
\label{eq:asy2}
\end{equation}

\subsection{Hypergeometric functions of the complex field}\label{ss:def}
Let $a_1|a'_1,\dots, a_q|a'_q$, $b_1|b'_1,\dots,b_p|b'_p\in \Lambda_\C$.
Temporarily, we assume that they satisfy the conditions:
\begin{equation}
\Re(a_\alpha+a_\alpha')>0,\qquad \Re(b_\beta+b_\beta')>0.
\label{eq:cond1}
\end{equation}
and
\begin{equation}
\sum_\alpha \Re(a_\alpha+a_\alpha')+\sum_\beta\Re(b_\beta+b_\beta')<p+q.
 \label{eq:cond2}
\end{equation}

We define the following function on $\Lambda_\C$:
\begin{gather}
\Kc{p}{q}\left[\genfrac{}{}{0pt}{}{(a|a')}{(b|b')};  \tfrac{k+\sigma}{2}\Bigl| \tfrac{-k+\sigma}{2}
\right]\nonumber\\
\qquad{} :=
 \prod_{\alpha=1}^q \Gamma^\C\Bigl(a_\alpha+\tfrac{k+\sigma}2\Bigl|a_\alpha'+\tfrac{-k+\sigma}2\Bigr)
 \prod_{\beta=1}^p \Gamma^\C\Bigl(b_\beta+\tfrac{-k-\sigma}2\Bigl|b_\beta'+\tfrac{k-\sigma}2\Bigr).
\label{eq:cK}
\end{gather}
Next, we define the hypergeometric functions of the complex field
as the contour integral:
\begin{equation}
\Gc{p}{q}\left[\genfrac{}{}{0pt}{}{(a|a')}{(b|b')};z\right]
:=\frac{1}{2\pi {\rm i}}
\sum_{k\in\Z} \int_{{\rm i}\R}
\Kc{p}{q}\left[\genfrac{}{}{0pt}{}{(a|a')}{(b|b')};\tfrac{k+\sigma}{2}\Bigl| \tfrac{-k+\sigma}{2}\right]
 z^{\frac{-k-\sigma}2\big| \frac{k-\sigma}2} \,{\rm d}\sigma.
 \label{eq:def}
\end{equation}
The integration is taken along the imaginary axis ${\rm i}\R$.
The condition \eqref{eq:cond2} provides the conditional convergence of
the integral in $\sigma$ and the absolute convergence of the series
(see Sections~\ref{ss:meijer} and \ref{ss:evaluation} below).
Under the stronger condition
\begin{equation}
\sum_\alpha \Re(a_\alpha+a_\alpha')+\sum_\beta\Re(b_\beta+b_\beta')<p+q-1
 \label{eq:cond3}
\end{equation}
all integrals in $\sigma$ convergence absolutely
(this follows from \eqref{eq:asy2}).

The expression \eqref{eq:cK} has poles (they are analyzed in
Section~\ref{ss:evaluation}) in $\sigma$ originating from two groups
of factors.
We call poles of the factors
$\Gamma^\C\big(a+\frac{k+\sigma}2\big|a'+\frac{-k+\sigma}2\big)$
\emph{left poles}, the poles of the factors
$\Gamma^\C\big(b+\frac{-k-\sigma}2\big|b'+\frac{k-\sigma}2\big)$
\emph{right poles}.
Under the conditions \eqref{eq:cond1} left poles are contained
in the left half-plane $\Re\sigma<0$,
right poles are contained in the right half plane $\Re\sigma>0$, and
the axis ${\rm i}\R$ separates these groups of poles.

As usual (see \cite{Sla}) we can write the analytic continuation of
$\Gc{p}{q}\big[\genfrac{}{}{0pt}{}{(a|a')}{(b|b')};z\big]$
to a wider domain of $(a|a')$, $(b|b')$ by moving integration contours.
Let us omit the condition~\eqref{eq:cond1} and fix
$a_\alpha|a'_\alpha$, $b_\beta|b_\beta'$.
Then ${\rm i}\R$ separates left and right poles for all but a finite number of
summands.
Assume
\begin{equation}
a_\alpha+b_\beta\notin\Z_+ \qquad\text{for all $\alpha$, $\beta$.}
\label{eq:cond4}
\end{equation}
For each $k$ we choose a contour $L_k$ that coincides with ${\rm i}\R$ at infinity
and separates left and right poles of the corresponding summand,
the result of the integration does not depend on a choice of $L_k$.
Then we replace the expression \eqref{eq:def} by
\[
\sum_k \int_{L_k}
\]
and get an expression of our integral in the domain
defined by conditions \eqref{eq:cond2}, \eqref{eq:cond4}.

\subsection{The statements of the paper}
For the same $(a|a')$, $(b|b')$ we define the expression
$\Sigma^\C_+(z)$ by
\begin{gather}
\Sigma^\C_+
\left[\genfrac{}{}{0pt}{}{(a|a')}{(b|b')};z\right]
:=2\sum_{j=1}^q z^{a_j|a_j'}\cdot \prod_{\beta}
\Gamma^\C\big(b_\beta+a_j\big|b_\beta'+a_j'\big)
\cdot \prod_{\alpha\ne j} \Gamma^\C\big(a_\alpha-a_j\big|a_\alpha'-a_j'\big) \nonumber\\
\qquad{}
\times
\hyp{p}{q-1}\left[\genfrac{}{}{0pt}{}{(b_\alpha+a_j)}
{(1-a_\alpha+a_j)_{\setminus j}};(-1)^q\,z\right]
\hyp{p}{q-1}\left[\genfrac{}{}{0pt}{}{(b_\alpha'+a_j')}
{(1-a_\alpha'+a_j')_{\setminus j}};(-1)^p\,\ov z\right].
 \label{eq:Xi}
\end{gather}
We also define the expression $\Sigma^\C_-(z)$ by
\[
\Sigma^\C_-
\left[\genfrac{}{}{0pt}{}{(a|a')}{(b|b')};z\right]
:=\Sigma^\C_+ \left[\genfrac{}{}{0pt}{}{(b|b')}{(a|a')};z^{-1}\right].
\]

\begin{Theorem}\label{th:1}
Let $(a|a')$, $(b|b')$ satisfy the conditions \eqref{eq:cond1} and~\eqref{eq:cond2}.
\begin{enumerate}\itemsep=0pt
\item[$(a)$] For $q>p$ we have
\[
\Gc{p}{q}\left[\genfrac{}{}{0pt}{}{(a|a')}{(b|b')};z\right]
=\Sigma^\C_+ \left[\genfrac{}{}{0pt}{}{(a|a')}{(b|b')};z\right]
\]
and for $q<p$ we have $_p\G^\C_q(z)=\Sigma^\C_-(z)$.
\item[$(b)$] For $p=q$
\[
\Gc{p}{q}(z)=
\begin{cases}
\Sigma^\C_+(z), & \text{if $|z|<1$}, \\
\Sigma^\C_-(z), & \text{if $|z|>1$}.
\end{cases}
\]
\end{enumerate}
\end{Theorem}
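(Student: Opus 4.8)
The plan is to evaluate the contour integral \eqref{eq:def} by residues, treating the summand for each fixed $k$ as a classical Mellin--Barnes integral in the variable $\sigma$ and imitating the standard Slater-type argument that expresses a Meijer $G$-function as a sum of generalized hypergeometric functions. First I would fix $k\in\Z$ and look at
\[
I_k(z):=\frac{1}{2\pi{\rm i}}\int_{{\rm i}\R}\Kc{p}{q}\left[\genfrac{}{}{0pt}{}{(a|a')}{(b|b')};\tfrac{k+\sigma}{2}\Bigl|\tfrac{-k+\sigma}{2}\right]z^{\frac{-k-\sigma}{2}|\frac{k-\sigma}{2}}\,{\rm d}\sigma.
\]
Using \eqref{eq:Gamma} to write each $\Gamma^\C$ as a ratio of ordinary Gamma functions, the integrand factors, up to elementary exponential/sign factors, as a product of a Mellin--Barnes integrand in the ``holomorphic'' variables $(a_\alpha,b_\beta)$ evaluated at $\tfrac{k+\sigma}{2}$ and one in the ``antiholomorphic'' variables $(a'_\alpha,b'_\beta)$ evaluated at $\tfrac{-k+\sigma}{2}$. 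The key structural observation is that the left poles (from the $a$-factors) sit at $\sigma$ such that $a_\alpha+\tfrac{k+\sigma}{2}\in\Z_-$, i.e.\ $\sigma=-2a_\alpha-k-2m$; closing the contour to the left when $|z|<1$ (using \eqref{eq:cond2} to control the arc at infinity exactly as in the classical case) picks up these poles and, summing over $k\in\Z$ and $m\in\Z_+$, reconstitutes precisely the double series in \eqref{eq:Xi}: the residue at the pole indexed by $(\alpha=j,m)$ contributes $z^{a_j|a'_j}$ times a term in the Pochhammer expansion of the product of two $\hyp{p}{q-1}$'s, with the prefactor built from \eqref{eq:res-gamma} and \eqref{eq:shift1}--\eqref{eq:shift2}.

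Concretely, I would carry out the steps in this order. (1) Verify the convergence claims: that under \eqref{eq:cond2} the $\sigma$-integral converges conditionally and the $k$-sum absolutely, citing \eqref{eq:asy2} and the asymptotics of Section~\ref{ss:asy}; this lets us manipulate the double sum/integral freely. (2) For a single $k$, push the contour $L_k$ to the left through the left poles, collecting residues; the poles of $\Kc{p}{q}$ in $\sigma$ coming from $\Gamma^\C(a_\alpha+\tfrac{k+\sigma}{2}|a'_\alpha+\tfrac{-k+\sigma}{2})$ occur where \emph{both} arguments are nonpositive integers, i.e.\ at $\sigma$ with $\tfrac{k+\sigma}{2}=-a_\alpha-m$, and there the residue is governed by \eqref{eq:res-gamma} with a specific value of the ``$m'$'' index forced by $\sigma$ and $k$. (3) Compute the residue: evaluate the remaining $\Gamma^\C$-factors at the pole using \eqref{eq:shift1}, \eqref{eq:shift2} to turn shifted arguments into Pochhammer symbols, obtaining the constant $\prod_\beta\Gamma^\C(b_\beta+a_j|b'_\beta+a'_j)\prod_{\alpha\ne j}\Gamma^\C(a_\alpha-a_j|a'_\alpha-a'_j)$ times $z^{a_j|a'_j}$ times the ratio of Pochhammer symbols that is the general term of $\hyp{p}{q-1}\big[\genfrac{}{}{0pt}{}{(b_\alpha+a_j)}{(1-a_\alpha+a_j)_{\setminus j}};(-1)^q z\big]$ in the holomorphic variable and its primed analog in $\ov z$; the signs $(-1)^q$, $(-1)^p$ and the overall factor $2$ emerge from tracking the $(-1)$'s in \eqref{eq:Gamma} and \eqref{eq:shift1}. (4) Sum over $m\in\Z_+$ and over $k\in\Z$: the ``row'' index of the two hypergeometric series corresponds to how the integer $k$ splits between the holomorphic and antiholomorphic Pochhammer towers, and summing over $k$ for fixed $j$ produces the full product of two series, giving $\Sigma^\C_+$. (5) For $q>p$ the arc at infinity vanishes for all $z\neq 0$, so no restriction on $|z|$ is needed; for $q=p$ the arc vanishes only for $|z|<1$, yielding part~(b) in the region $|z|<1$. (6) For the complementary statements — $q<p$, or $q=p$ with $|z|>1$ — close the contour to the right instead, collect right poles, and observe that the resulting series is obtained from the left-pole computation by the substitutions $(a|a')\leftrightarrow(b|b')$, $z\mapsto z^{-1}$, which is exactly the definition of $\Sigma^\C_-$; alternatively invoke the functional equation of $\Gc{p}{q}$ under inversion that swaps numerator and denominator parameters.

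The main obstacle I anticipate is bookkeeping the interaction between the discrete variable $k$ and the continuous residue parameter $m$: a pole of the full integrand in $\sigma$ is ``double'' in the sense that it forces congruences on both $\tfrac{k+\sigma}{2}$ and $\tfrac{-k+\sigma}{2}$, and one must check that as $(k,\sigma)$ ranges over all left poles the pair of integers $(m,m')$ appearing in \eqref{eq:res-gamma} ranges exactly once over a set that reassembles into the Cauchy product of the two one-variable series — with no double counting and no missing terms — and that the indexing shift $k\mapsto k-2a_j-\cdots$ is handled correctly so that the summand indeed depends only on the new summation variables. A secondary subtlety is justifying the contour shift itself when \eqref{eq:cond1} is dropped: as noted in the excerpt, for all but finitely many $k$ the imaginary axis already separates left and right poles, but for the exceptional $k$ one must deform to $L_k$ first, and condition \eqref{eq:cond4} is what guarantees no pole is ``pinched'' (no left pole coincides with a right pole), so the residue series has no spurious terms; I would isolate this as a lemma about pole separation before doing the residue sum. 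Everything else is the routine Slater computation carried out twice (holomorphic and antiholomorphic) in parallel, glued by the identity \eqref{eq:Gamma}.
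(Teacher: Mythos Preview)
Your proposal is correct and follows essentially the same route as the paper: write $\Gc{p}{q}$ as $\sum_k z^{-k/2|k/2}I_k(z)$, recognize each $I_k$ as a classical Mellin--Barnes integral, invoke the standard Meijer/Slater residue evaluation (the paper cites Marichev's Theorem~18 rather than redoing the contour shift), compute the residues via \eqref{eq:res-gamma}, \eqref{eq:shift1}, \eqref{eq:shift2}, and reassemble into the product of two $\hyp{p}{q-1}$-series. The one practical simplification the paper makes that dissolves your ``main obstacle'' is to parametrize the left poles of the $j$-th factor directly by pairs $(m,m')\in\Z_+^2$ via $k=-m+m'-a_j+a'_j$, $\sigma=-m-m'-a_j-a'_j$ (so $\tfrac{k+\sigma}{2}=-a_j-m$, $\tfrac{-k+\sigma}{2}=-a'_j-m'$), which gives a clean bijection between poles and coefficients of the Cauchy product and makes the bookkeeping trivial; you may find it easier to adopt this $(m,m')$-indexing from the outset rather than first fixing $k$ and then indexing by $m$.
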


\begin{Remark} This statement and the main argument for its proof
are potentially contained in the paper by Ismagilov \cite[Lemma~2]{Ism2}.
\end{Remark}

\begin{Corollary}\label{cor:differential-eq}
The function $F(z):=\Gc{p}{q}\big[\genfrac{}{}{0pt}{}{(a|a')}{(b|b')};z\big]$
satisfies the following system of differential equations
\begin{equation}
\cD F=0,\qquad \ov\cD F=0,
\label{eq:differential-eq}
\end{equation}
where
\begin{subequations}
\begin{gather}
\cD :=(-1)^{q} \prod_{\alpha=1}^p
\left(z\frac{\partial}{\partial z}+a_\alpha\right) -
z\prod_{\beta=1}^q \left(z\frac{\partial}{\partial z}-b_\beta\right),
\label{eq:cD} \\
\ov\cD :=(-1)^p \prod_{\alpha=1}^p
\left(\ov z\frac{\partial}{\partial \ov z}+a_\alpha'\right)
-\ov z\prod_{\beta=1}^q
\left(\ov z\frac{\partial}{\partial \ov z}-b_\beta'\right).
\label{eq:ovcD}
\end{gather}
\end{subequations}
\end{Corollary}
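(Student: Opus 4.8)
The plan is to read the equations \eqref{eq:differential-eq} off the closed-form evaluation of ${}_pG^\C_q$ provided by Theorem~\ref{th:1}. Fix $z\in\C^\times$, and when $p=q$ assume in addition $|z|\ne1$. By Theorem~\ref{th:1}, near $z$ the function $F$ equals $\Sigma^\C_+$ (if $q>p$, or if $p=q$ and $|z|<1$) or $\Sigma^\C_-$ (if $q<p$, or if $p=q$ and $|z|>1$). Each of $\Sigma^\C_\pm$ is a \emph{finite} sum of terms $c\,\phi(z)\,\psi(\ov z)$ in which $c$ is a product of values of $\Gamma^\C$ and hence constant in $z$, $\phi$ is holomorphic and $\psi$ antiholomorphic near $z$; for $\Sigma^\C_+$ one has $\phi(z)=z^{a_j}\hyp{p}{q-1}\big[\genfrac{}{}{0pt}{}{(b_\alpha+a_j)}{(1-a_\alpha+a_j)_{\setminus j}};(-1)^q z\big]$ and $\psi(\ov z)=\ov z^{\,a_j'}\hyp{p}{q-1}\big[\genfrac{}{}{0pt}{}{(b_\alpha'+a_j')}{(1-a_\alpha'+a_j')_{\setminus j}};(-1)^p\ov z\big]$, which converge on the region in question. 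The operator $\cD$ of \eqref{eq:cD} involves only $z$ and $\partial/\partial z$, and $\ov\cD$ of \eqref{eq:ovcD} only $\ov z$ and $\partial/\partial\ov z$; since $\partial\psi(\ov z)/\partial z=0$ and $\partial\phi(z)/\partial\ov z=0$ we get
\[
\cD\big(\phi(z)\psi(\ov z)\big)=\psi(\ov z)\,\cD\phi(z),\qquad
\ov\cD\big(\phi(z)\psi(\ov z)\big)=\phi(z)\,\ov\cD\psi(\ov z).
\]
Hence it suffices to prove that $\cD$ annihilates every holomorphic factor and $\ov\cD$ every antiholomorphic factor of $\Sigma^\C_\pm$.

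This is the classical theory of the scalar generalized hypergeometric equation. For $\Sigma^\C_+$ one checks that $z^{a_j}\hyp{p}{q-1}\big[\genfrac{}{}{0pt}{}{(b_\alpha+a_j)}{(1-a_\alpha+a_j)_{\setminus j}};(-1)^q z\big]$ solves $\cD u=0$: inserting $z^{a_j}\sum_{n\ge0}c_nz^n$ into $\cD$ and using $z\tfrac{\partial}{\partial z}z^{a_j+n}=(a_j+n)z^{a_j+n}$ together with the fact that multiplication by $z$ sends $n\mapsto n+1$, the equation $\cD u=0$ becomes a first-order recursion for the $c_n$ whose solution with $c_0=1$ is exactly the $\hyp{p}{q-1}$-series written above, while the prefactor $z^{a_j}$ is the indicial-exponent factor at $z=0$. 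The same computation with $z$, $a_\alpha$, $b_\beta$, $(-1)^q$ replaced by $\ov z$, $a_\alpha'$, $b_\beta'$, $(-1)^p$ shows that $\ov\cD$ annihilates the antiholomorphic factors; the asymmetry between $(-1)^q$ in \eqref{eq:cD} and $(-1)^p$ in \eqref{eq:ovcD} is precisely what matches the arguments $(-1)^q z$ and $(-1)^p\ov z$ of the two hypergeometric series. For $\Sigma^\C_-$ one argues in the same way with the Frobenius solutions of $\cD u=0$ and $\ov\cD v=0$ at $z=\infty$; equivalently, one applies the foregoing after the substitution $z\mapsto z^{-1}$ and the interchange $(a|a')\leftrightarrow(b|b')$, $p\leftrightarrow q$ appearing in the definition of $\Sigma^\C_-$. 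This proves \eqref{eq:differential-eq} for $|z|\ne1$, hence on all of $\C^\times$ when $p\ne q$.

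When $p=q$ it remains to reach the circle $|z|=1$: the left-hand sides of \eqref{eq:differential-eq} are real-analytic wherever $F$ is, they vanish on the dense open set $\{|z|\ne1\}$, and hence vanish throughout the domain of $F$. I do not expect this route to encounter any real obstacle — the recursion in the second paragraph is routine and the rest is bookkeeping. (Should one instead want a proof that avoids Theorem~\ref{th:1}, one would differentiate \eqref{eq:def} directly under the sum and the integral: $z\tfrac{\partial}{\partial z}$ multiplies the power $z^{\frac{-k-\sigma}2|\frac{k-\sigma}2}$ by $\tfrac{-k-\sigma}2$, multiplication by $z^{\pm1}$ realizes the shift $(k,\sigma)\mapsto(k\mp1,\sigma\mp1)$ of the integration variables, and the functional equations \eqref{eq:shift1}--\eqref{eq:shift2} for $\Gamma^\C$ then identify $\cD F$ and $\ov\cD F$ with integrals of exactly the shape \eqref{eq:def} that cancel once the contour is pushed from ${\rm i}\R\pm1$ back to ${\rm i}\R$. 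In this variant the one genuinely delicate step — which I regard as the main technical obstacle — is justifying that contour shift and the differentiation under the integral sign, both of which rest on the decay estimate \eqref{eq:asy2}.)
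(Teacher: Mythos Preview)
Your argument is correct and follows the paper's own route: express $\Gc{p}{q}$ as $\Sigma^\C_\pm$ via Theorem~\ref{th:1}, then observe that each summand factors as a holomorphic times an antiholomorphic function, with the holomorphic factors $z^{a_j}\hyp{p}{q-1}[\dots]$ being precisely the Frobenius solutions of the generalized hypergeometric ODE $\cD u=0$ (the paper simply cites Slater and Luke for this classical fact rather than redoing the recursion you sketch). Your parenthetical alternative --- differentiating \eqref{eq:def} directly and using the shift identities for $\Gamma^\C$ --- is exactly the second proof the paper records in its Remark, carried out via the contiguous relations \eqref{eq:A-diff1}--\eqref{eq:A-diff4}.
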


Consider the expression $\Gc{p}{q}\big[\genfrac{}{}{0pt}{}{(a|a')}{(b|b')};z\big]$
as a function of a variable $z$ and parameters
$(a|a')$, $(b|b')$, i.e., a function defined
on a certain domain in the space{\samepage
\[
\C\times (\Lambda_\C)^{p+q}\simeq \C\times \Z^{p+q}\times \C^{p+q}.
\]
Fixing integer parameters\footnote{The conditions
\eqref{eq:cond1} and \eqref{eq:cond2} allow arbitrary integer parameters.}
$(a-a')$, $(b-b')$ we get a countable collection of functions on domains in $\C\times \C^{p+q}$.

}

Next, fix $\alpha$, $\beta$, fix also $m$, $m'\in\Z_+$.
Consider the subset $S(\alpha, \beta;m,m')$ in $\C\times (\Lambda_\C)^{p+q}$
defined by the equations
\begin{equation}
\begin{cases}
a_\alpha+b_\beta=m, \\
a_\alpha'+b_\beta'=m'.
\end{cases}
\label{eq:collision}
\end{equation}

\begin{Proposition}\label{pr:continuation}
For fixed $(a-a')\in \Z^{p+q}$,
$(b-b')\in\Z^{p+q}$ the expression
$\Gc{p}{q}\big[\genfrac{}{}{0pt}{}{(a|a')}{(b|b')};z\big]$
as a~function of $(a)$, $(b)$, $z$ admits an extension to a~function
which is real analytic in $z$ \textup{(}i.e., real analytic as a function
in two variables $\Re z$, $\Im z$\textup{)} in the domain
\begin{gather*}
z\in \C\setminus\{0\}, \qquad \text{if $p\ne q$}, \\
z\in \C\setminus\{0,(-1)^p\}, \qquad \text{if $p= q$},
\end{gather*}
and meromorphic in $(a)$, $(b)$ with singularities
\textup{(}simple poles\textup{)} located in
\[
\bigcup_{\alpha,\beta,m,m'} S(\alpha, \beta;m,m').
\]
\end{Proposition}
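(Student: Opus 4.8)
The plan is to leverage Theorem~\ref{th:1}, which on the domain \eqref{eq:cond1}--\eqref{eq:cond2} identifies $\Gc{p}{q}\big[\genfrac{}{}{0pt}{}{(a|a')}{(b|b')};z\big]$ with the explicit expression $\Sigma^\C_+(z)$ (or $\Sigma^\C_-(z)$, or a piecewise combination for $p=q$). Since $\Sigma^\C_\pm$ is built out of $\Gamma^\C$-factors and two ordinary hypergeometric series $_pF_{q-1}$ whose arguments are $(-1)^q z$ and $(-1)^p\ov z$, the claimed real-analyticity in $z$ on the punctured plane (or twice-punctured plane when $p=q$) is immediate: the $_pF_{q-1}$'s are entire when $p\le q-1$, and for $p=q$ they are analytic on $\{|z|<1\}$, while the $\Sigma^\C_-$ representation handles $\{|z|>1\}$, the two agreeing on the overlap by Theorem~\ref{th:1}(b); the prefactors $z^{a_j|a_j'}$ are real-analytic on $\C\setminus\{0\}$ once $a_j-a_j'\in\Z$ is fixed. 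So the substance of the proposition is the meromorphic continuation in the parameters $(a),(b)$ (with $(a-a')$, $(b-b')$ fixed as integers), together with the precise location of the poles.

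First I would fix the integer data $(a-a')\in\Z^{p+q}$, $(b-b')\in\Z^{p+q}$, so that we are looking at a holomorphic function of the complex variables $(a_1,\dots,a_q,b_1,\dots,b_p)\in\C^{p+q}$ on the open set cut out by \eqref{eq:cond1}--\eqref{eq:cond2}, and ask where it extends. The expression $\Sigma^\C_+$ in \eqref{eq:Xi} is a finite sum over $j$ of terms
\[
z^{a_j|a_j'}\,\Big(\textstyle\prod_\beta \Gamma^\C(b_\beta+a_j|b_\beta'+a_j')\Big)\,\Big(\prod_{\alpha\ne j}\Gamma^\C(a_\alpha-a_j|a_\alpha'-a_j')\Big)\,{}_pF_{q-1}[\dots;(-1)^qz]\,{}_pF_{q-1}[\dots;(-1)^p\ov z].
\]
Each such term is, separately, meromorphic in $(a),(b)\in\C^{p+q}$: the $\Gamma^\C$-factors are meromorphic by \eqref{eq:Gamma}, and the ${}_pF_{q-1}$-series, as a function of its numerator/denominator parameters, is meromorphic with possible poles only where a lower denominator parameter hits $\Z_-$. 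So the whole of $\Sigma^\C_+$ is a meromorphic function on $\C^{p+q}$, and likewise $\Sigma^\C_-$. The point is that these meromorphic functions agree with $\Gc{p}{q}$ on the nonempty open domain \eqref{eq:cond1}--\eqref{eq:cond2}, hence furnish the desired continuation; then I would compute their polar divisor and show that the apparent poles coming from $\prod_{\alpha\ne j}\Gamma^\C(a_\alpha-a_j|\cdots)$ and from the denominator parameters $(1-a_\alpha+a_j)_{\setminus j}$ of the ${}_pF_{q-1}$'s, and the poles at $b_\beta+a_j\in\Z_-$ (or rather the interplay of $\Gamma^\C(b_\beta+a_j|\cdots)$ with the numerator parameters), all cancel when one sums over $j$, leaving only the collisions \eqref{eq:collision}.

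The main obstacle is exactly this cancellation of spurious singularities in the sum over $j$. When $a_\alpha-a_j$ approaches a value making $\Gamma^\C(a_\alpha-a_j|a_\alpha'-a_j')$ singular (i.e.\ $a_\alpha-a_j\in\Z_-$, with the companion constraint on primed indices), the $j$-term and the $\alpha$-term of $\Sigma^\C_+$ develop matching poles with opposite residues; this is the classical mechanism by which Meijer $G$-functions are analytic in their parameters even though the Slater-type sum of ${}_pF_{q-1}$'s has individually singular summands, and the cleanest way to see it here is to go back to the Mellin--Barnes contour integral \eqref{eq:def} itself: moving the contour $L_k$ and collecting residues is precisely what produces $\Sigma^\C_+$, and the contour integral is manifestly holomorphic in $(a),(b)$ as long as no left pole of $\Kc{p}{q}$ collides with a right pole, i.e.\ as long as \eqref{eq:cond4} holds, equivalently as long as we stay off $\bigcup S(\alpha,\beta;m,m')$. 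Thus the strategy is: (i) establish holomorphy of the contour integral off the collision locus, using that left poles (from the $a_\alpha$-factors) and right poles (from the $b_\beta$-factors) can be kept separated by a suitable $L_k$ whenever \eqref{eq:cond4} holds, and that \eqref{eq:cond2} keeps convergence uniform on compacta; (ii) invoke Theorem~\ref{th:1} to identify this holomorphic-in-parameters, real-analytic-in-$z$ function with $\Gc{p}{q}$ on the original domain; (iii) analyze a transversal approach to $S(\alpha,\beta;m,m')$ and show the singularity is a simple pole — here one uses \eqref{eq:res-gamma} for the residue of $\Gamma^\C$ at $(-m)|(-m')$ and checks that exactly one pair of colliding poles survives, giving order one. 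I expect step (iii), controlling the order of the pole and ruling out higher-order singularities when several collisions happen simultaneously (which one avoids by working on the generic stratum and noting the singular set is the stated union of smooth hypersurfaces), to require the most care, but it is a direct residue computation rather than anything structurally new.
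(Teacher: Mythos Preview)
Your plan for $p\ne q$ and for the meromorphy in the parameters is essentially the paper's: use $\Sigma^\C_\pm$ from Theorem~\ref{th:1} to get an explicit meromorphic expression in $(a),(b)$ and real-analytic in $z$, then rule out the spurious poles at $a_\alpha-a_j\in\Z$ by going back to the contour integral~\eqref{eq:def}, which is holomorphic in the parameters as long as left and right poles of $\Kc{p}{q}$ can be separated (i.e.\ off the collision locus~\eqref{eq:collision}). That part is fine.

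The genuine gap is the $p=q$ case on the circle $|z|=1$. You write that $\Sigma^\C_+$ covers $\{|z|<1\}$, $\Sigma^\C_-$ covers $\{|z|>1\}$, ``the two agreeing on the overlap by Theorem~\ref{th:1}(b)''. There is no overlap: Theorem~\ref{th:1}(b) gives two representations on the \emph{disjoint} open sets $|z|<1$ and $|z|>1$, and nothing in your argument establishes that $\Gc{p}{q}$ (or either $\Sigma^\C_\pm$) extends real-analytically across $|z|=1$ away from $z=(-1)^p$. Each summand of $\Sigma^\C_+$ is a product of a holomorphic ${}_pF_{p-1}$ in $(-1)^p z$ and an antiholomorphic one in $(-1)^p\ov z$; individually these carry branch cuts emanating from $(-1)^p$, and the cancellation needed to make the sum single-valued and smooth across the unit circle is not automatic.

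The paper supplies the missing mechanism: it first restricts to the smaller parameter region $\sum\Re(a_\alpha+a_\alpha')+\sum\Re(b_\beta+b_\beta')<p-2$, so that the integrand in~\eqref{eq:def} decays fast enough for $\Gc{p}{q}$ to have continuous partial derivatives up to order~$p$ on all of $\C\setminus\{0\}$. Then the differential equation $\cD F=0$ of Corollary~\ref{cor:differential-eq} holds across $|z|=1$ by continuity, and since $\cD$ is an elliptic operator for $z\ne 0,(-1)^p$, elliptic regularity forces $F$ to be real-analytic there. Once real-analyticity is known on this smaller open set of parameters, the meromorphic dependence of $\Sigma^\C_+$ in $(a),(b)$ propagates it everywhere. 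You should incorporate this step (or an equivalent argument) to close the circle-crossing gap.
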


\begin{Lemma}\label{cor:1}\quad
\begin{enumerate}\itemsep=0pt
\item[$(a)$]
Let the parameters $(a|a')$, $(b|b')$ satisfy the conditions
\begin{gather*}
\Re(a_\alpha+a_\alpha')>0,\qquad \Re(b_\beta+b_\beta')>0,\\
\sum_\alpha \Re(a_\alpha+a_\alpha')+\sum_\beta(b_\beta+b_\beta')<p+q-1.
\end{gather*}
Then the function $\Gc{p}{q}\big[\genfrac{}{}{0pt}{}{(a|a')}{(b|b')};z\big]$
is contained in $L^2\big(\C,|z|^{-2}\,\dd z\big)$.
\item[$(b)$] Under the same conditions
\[
 \frac1{2\pi}\int_\C z^{\sigma-1|\sigma'-1}\,
\Gc{p}{q}\left[\genfrac{}{}{0pt}{}{(a|a')}{(b|b')};z\right] \, \dd z=
\Kc{p}{q}\left[\genfrac{}{}{0pt}{}{(a|a')}{(b|b')};\sigma|\sigma'\right],
\]
where the integral is understood as a Mellin transform in $L^2$.
\end{enumerate}
\end{Lemma}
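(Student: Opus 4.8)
The plan is to identify $\Gc{p}{q}\big[\genfrac{}{}{0pt}{}{(a|a')}{(b|b')};z\big]$, as defined by \eqref{eq:def}, with the inverse complex Mellin transform of the function $g$ obtained by restricting $\Kc{p}{q}\big[\genfrac{}{}{0pt}{}{(a|a')}{(b|b')};\,\cdot\,\big]$ to $\Lambda$, and to deduce both parts from the unitarity of $\cM$. Putting $\sigma={\rm i}s$ and $\xi=\tfrac12(k+{\rm i}s)$, so that $\tfrac{k+\sigma}2=\xi$ and $\tfrac{-k+\sigma}2=-\ov\xi$, the sum-integral \eqref{eq:def} is term by term the Mellin inversion formula applied to $g$. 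Since the invariant measure satisfies $\dd t/t^{1|1}=|z|^{-2}\,\dd z$ and $\cM$ is a unitary isomorphism $L^2\big(\C^\times,|z|^{-2}\,\dd z\big)\to L^2(\Lambda)$, the two facts that (i) $g\in L^2(\Lambda)$ and (ii) $\Gc{p}{q}=\cM^{-1}g$ as an $L^2$-function together give $\Gc{p}{q}\in L^2\big(\C,|z|^{-2}\,\dd z\big)$, which is part~$(a)$, and, applying $\cM$, the equality $\cM\Gc{p}{q}=g$, i.e.\ $\tfrac1{2\pi}\int_\C z^{\sigma-1|\sigma'-1}\Gc{p}{q}(z)\,\dd z=\Kc{p}{q}\big[\genfrac{}{}{0pt}{}{(a|a')}{(b|b')};\sigma|\sigma'\big]$ in the $L^2$ sense, which is part~$(b)$. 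So everything reduces to (i) and (ii).

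For (i) I would use the asymptotics \eqref{eq:asy2}. By \eqref{eq:cK}, with $\sigma={\rm i}s$, $\xi=\tfrac12(k+{\rm i}s)$ and $\eta:=-\xi$,
\[
\bigl|g(\xi|{-}\ov\xi)\bigr|
=\prod_{\alpha=1}^{q}\bigl|\Gamma^\C(a_\alpha+\xi|a_\alpha'-\ov\xi)\bigr|\cdot
\prod_{\beta=1}^{p}\bigl|\Gamma^\C(b_\beta+\eta|b_\beta'-\ov\eta)\bigr|,
\]
the second product having been rewritten using $\tfrac{-k-\sigma}2=-\xi=\eta$, $\tfrac{k-\sigma}2=\ov\xi=-\ov\eta$ and $|\eta|=|\xi|$. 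Applying \eqref{eq:asy2} to each of the $p+q$ factors gives $\bigl|g(\xi|{-}\ov\xi)\bigr|\sim|\xi|^{\Theta}$ as $|\xi|\to\infty$, where $\Theta:=S-(p+q)$ and $S:=\sum_\alpha\Re(a_\alpha+a_\alpha')+\sum_\beta\Re(b_\beta+b_\beta')$; the hypothesis is precisely $S<p+q-1$, i.e.\ $\Theta<-1$. On the bounded part of $\Lambda$ the conditions \eqref{eq:cond1} ensure, as noted just after \eqref{eq:def}, that ${\rm i}\R$ meets no pole of any $\Gamma^\C$-factor, so $g$ is continuous and hence bounded there; combining the two estimates, $|g(\tfrac{k+{\rm i}s}2|\tfrac{-k+{\rm i}s}2)|^2\le C(1+k^2+s^2)^{\Theta}$ (recall $|\xi|^2=\tfrac14(k^2+s^2)$). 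Therefore $\|g\|_{L^2(\Lambda)}^2$ is majorised by a constant times $\sum_{k\in\Z}\int_\R(1+k^2+s^2)^{\Theta}\,{\rm d}s$; for $k\ne0$ the inner integral is of order $|k|^{2\Theta+1}$, which is summable because $2\Theta+1<-1$, and the $k=0$ term converges because $\Theta<-\tfrac12$. Hence $g\in L^2(\Lambda)$.

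Fact (ii) is the one requiring real care: reconciling the only conditionally convergent series-plus-integral \eqref{eq:def} with the $L^2$-theoretic inverse transform. Under the present hypothesis, which is \eqref{eq:cond3}, each $\sigma$-integral in \eqref{eq:def} converges absolutely; hence for every $N$ the truncation $g_N$ of $g$ to $|k|\le N$ lies in $L^1(\Lambda)\cap L^2(\Lambda)$, its inverse Mellin transform is given pointwise by the partial sum $\sum_{|k|\le N}\int_{{\rm i}\R}$ of \eqref{eq:def}, and, as $g_N\to g$ in $L^2(\Lambda)$, these partial sums converge in $L^2$ to $\cM^{-1}g$; on the other hand, by \eqref{eq:cond2} they converge, for almost every $z$, to $\Gc{p}{q}(z)$, which is how \eqref{eq:def} is defined. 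Comparing the two limits yields $\Gc{p}{q}=\cM^{-1}g$ a.e., and the lemma follows. I expect exactly this reconciliation of the pointwise series with the $L^2$-theoretic transform — not the asymptotic bookkeeping via \eqref{eq:asy2}, which is routine — to be the only genuine obstacle.
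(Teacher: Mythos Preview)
Your proof is correct and follows essentially the same route as the paper: the paper reduces the lemma to Corollary~\ref{cor:L2}(b) (the $L^2(\Lambda)$-membership of $\Kc{p}{q}$, obtained from the asymptotics~\eqref{eq:asy2}) and then observes in one sentence that, under this condition, the defining sum-integral~\eqref{eq:def} \emph{is} the inverse Mellin transform of $\Kc{p}{q}$, so unitarity of~$\cM$ gives both parts. Your argument is the same, only more explicit --- in particular your step~(ii), reconciling the pointwise partial sums with the $L^2$-inverse transform via the truncations $g_N\in L^1\cap L^2$, makes precise a passage the paper simply asserts.
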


\begin{Theorem}
\label{cor:2}
Let two functions
\[
\Gc{p}{q}\left[\genfrac{}{}{0pt}{}{(a|a')}{(b|b')};z\right],\qquad
\Gc{r}{s}\left[\genfrac{}{}{0pt}{}{(c|c')}{(d|d')};z\right]
\]
be contained in $L^2\big(\C, |z|^{-2}\dd z\big)$.
Then
\begin{equation}
\frac 1{2\pi}
\int_\C
\Gc{p}{q}\left[\genfrac{}{}{0pt}{}{(a|a')}{(b|b')};z\right] \cdot
\Gc{r}{s}\left[\genfrac{}{}{0pt}{}{(c|c')}{(d|d')};\frac{t}{z}\right]
\frac{\dd z}{z^{1|1}} =
\Gc{p+r}{q+s}\left[\genfrac{}{}{0pt}{}{(a|a'),(c|c')}
{(b|b'),(d|d')};t\right].
\label{eq:convolutions}
\end{equation}
\end{Theorem}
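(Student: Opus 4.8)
The plan is to deduce the convolution formula \eqref{eq:convolutions} from the Mellin-transform characterization in Lemma~\ref{cor:1} together with the fact, recalled in the Notation section, that the complex Mellin transform $\cM$ turns convolutions on $\C^\times$ into products (up to the factor $2\pi$). Concretely, I would first observe that both factors on the left-hand side lie in $L^2\big(\C,|z|^{-2}\dd z\big) = L^2\big(\C^\times,{\dd z}/{z^{1|1}}\big)$ by hypothesis, and by Lemma~\ref{cor:1}$(b)$ their Mellin transforms are, respectively,
\[
\cM\,\Gc{p}{q}\Bigl[\genfrac{}{}{0pt}{}{(a|a')}{(b|b')};\cdot\Bigr]\bigl(\sigma|\sigma'\bigr)
=\Kc{p}{q}\Bigl[\genfrac{}{}{0pt}{}{(a|a')}{(b|b')};\sigma|\sigma'\Bigr],
\qquad
\cM\,\Gc{r}{s}\Bigl[\genfrac{}{}{0pt}{}{(c|c')}{(d|d')};\cdot\Bigr]\bigl(\sigma|\sigma'\bigr)
=\Kc{r}{s}\Bigl[\genfrac{}{}{0pt}{}{(c|c')}{(d|d')};\sigma|\sigma'\Bigr].
\]
The key algebraic point is then that the $\cK$-kernels multiply correctly: from the defining product formula \eqref{eq:cK} one sees immediately that
\[
\Kc{p}{q}\Bigl[\genfrac{}{}{0pt}{}{(a|a')}{(b|b')};\sigma|\sigma'\Bigr]\cdot
\Kc{r}{s}\Bigl[\genfrac{}{}{0pt}{}{(c|c')}{(d|d')};\sigma|\sigma'\Bigr]
=\Kc{p+r}{q+s}\Bigl[\genfrac{}{}{0pt}{}{(a|a'),(c|c')}{(b|b'),(d|d')};\sigma|\sigma'\Bigr],
\]
since concatenating the parameter lists just concatenates the corresponding products of $\Gamma^\C$-factors.

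Next I would handle the analytic bookkeeping. The convolution $f*g$ with $f,g\in L^2$ need not be in $L^2$ in general, so one must argue that in our situation the right-hand side $\Gc{p+r}{q+s}[\,\cdot\,;t]$ is indeed the honest pointwise convolution and that the Mellin inversion applies. For this I would check that the combined parameter set $(a|a'),(c|c')$ and $(b|b'),(d|d')$ still satisfies the growth condition \eqref{eq:cond3} (or at least \eqref{eq:cond2}) needed to define $\Gc{p+r}{q+s}$: writing out $\sum\Re(a+a')+\sum\Re(c+c')+\sum\Re(b+b')+\sum\Re(d+d')$ and using that each of the two original tuples satisfies the hypothesis of Lemma~\ref{cor:1}, namely a bound by $p+q-1$ resp.\ $r+s-1$, one gets a bound by $(p+r)+(q+s)-2 < (p+r)+(q+s)-1$, so the sum $\Gc{p+r}{q+s}$ is well-defined and, by Lemma~\ref{cor:1}$(a)$ applied again, lies in $L^2\big(\C,|z|^{-2}\dd z\big)$ with Mellin transform exactly the product kernel above. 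Then the identity \eqref{eq:convolutions} follows from injectivity (indeed unitarity) of $\cM$ on $L^2\big(\C^\times,{\dd z}/{z^{1|1}}\big)$.

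The remaining, and I think most delicate, point is to justify that the left-hand side of \eqref{eq:convolutions}, interpreted as the convolution $\bigl(\Gc{p}{q}*\Gc{r}{s}\bigr)(t)$ on $\C^\times$, has Mellin transform equal to $2\pi$ times the product of the two Mellin transforms \emph{as a pointwise identity in $t$}, not merely as an $L^2$ statement. One clean way is to split into regimes: when all tuples satisfy a strong enough decay so that both functions are also in $L^1\big(\C^\times,{\dd z}/{z^{1|1}}\big)$, the convolution is classical and $\cM(f*g)=2\pi\,\cM f\cdot \cM g$ holds pointwise; one then extends to the stated range by analytic continuation in the parameters $(a),(b),(c),(d)$, using Proposition~\ref{pr:continuation} to guarantee that both sides of \eqref{eq:convolutions} are meromorphic in the parameters with poles only along the collision loci $S(\alpha,\beta;m,m')$, and that these loci do not fill the domain. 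Since two meromorphic functions agreeing on an open set agree everywhere, this gives \eqref{eq:convolutions} throughout the region where both sides are defined and finite. If one prefers to avoid $L^1$, an alternative is to invoke directly the general fact that for $f,g\in L^2$ of a unimodular group with $\wt f*g$ continuous (which here follows from the rapid decay of the $\cK$-kernels in $\sigma$, cf.\ \eqref{eq:asy2} and \eqref{eq:cond3}), the Fourier/Mellin transform of the convolution is the product of the transforms; this is the route I would actually write up, citing the Mellin-transform properties stated in the Notation section.
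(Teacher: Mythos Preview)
Your approach is essentially the same as the paper's: identify $\cM\Gc{p}{q}=\Kc{p}{q}$ via Lemma~\ref{cor:1}, observe that the $\cK$-kernels multiply to $\Kc{p+r}{q+s}$, and invoke the $L^2$ convolution theorem for the Fourier transform on $\C^\times$ (the paper cites Titchmarsh for exactly your route~(b)). Your extra bookkeeping---verifying that the combined parameters still satisfy the hypothesis of Lemma~\ref{cor:1} because $(p+q-1)+(r+s-1)<(p+r)+(q+s)-1$---is a useful detail the paper leaves implicit.
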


Taking $p=q=1$ (see \eqref{eq:1G1}) we get a counterpart of formula
\eqref{eq:inductive}:
\begin{gather*}
\frac 1{2\pi}\int_\C z^{a|a'}(1+z)^{-a-b|-a'-b'} \cdot
\Gc{r}{s}\left[\genfrac{}{}{0pt}{}{(c|c')}{(d|d')};\frac{t}{z}\right]
\frac{\dd z}{z^{1|1}} \\
\qquad{} =\frac 1{2\Gamma^\C(a+b|a'+b')}\,
\Gc{1+r}{1+s}\left[\genfrac{}{}{0pt}{}{a|a',(c|c')}{b|b',(d|d')};t\right].
\end{gather*}

\section{Proofs}\label{section2}

\subsection[Asymptotics of $\Gamma^\C$]{Asymptotics of $\boldsymbol{\Gamma^\C}$}\label{ss:asy}
Let us derive formula \eqref{eq:asy1} for the asymptotics of the function $\Gamma^C(a+\xi|a'-\ov \xi)$.
First, let us verify that the right-hand side is single-valued
on $\Lambda$.
We must verify that the expression
\[
\exp\big\{ \xi\ln\xi - \ov\xi\ln\ov\xi\big\}, \qquad \xi=\tfrac{k+{\rm i}s}2
\]
is single valued.
Represent $\xi=r {\rm e}^{{\rm i}\phi}$. We transform our expression as
\begin{gather*}
\exp\big\{\xi(\ln(r)+{\rm i}\phi+2\pi {\rm i} n)-
\ov \xi(\ln(r)-{\rm i}\phi-2\pi {\rm i} n) \big\} \\
\qquad{}=\exp\big\{(\xi-\ov\xi) \ln r +(\xi+\ov\xi)({\rm i}\phi+2\pi {\rm i} n) \big\}.
\end{gather*}
We have $\xi+\ov\xi\in \Z$ and therefore the exponential is single valued.

Next, we apply the Stirling formula in the form
(see \cite[equation~(1.18(3))]{HTF1})
\[
\ln
\Gamma(c+z)=\big(z+c-\tfrac12\big)\ln z -z+\tfrac 12\ln(2\pi)+O\big(z^{-1}\big),
\qquad \text{where $|\arg (z)|<\pi-\epsilon$},
\]
and get
\begin{gather*}
\Gamma(a+\xi) \simeq \sqrt{2\pi}\exp
\big\{ \big(\xi+a-\tfrac12\big)\ln\xi-\xi \big\},\\
\Gamma(1-a'-\ov \xi) \simeq \sqrt{2\pi}\exp
\big\{ \big(\ov\xi-a'+\tfrac12\big)\ln\ov\xi-\ov\xi\big\}.
\end{gather*}
The ratio gives us \eqref{eq:asy1}.
An evaluation of the asymptotics in the sector $|\arg(-z)|<\pi-\epsilon$
gives the same result.

\begin{Corollary}\label{cor:L2}
Let $\Re(a_\alpha+a_\alpha')>0$, $\Re(b_\beta+b_\beta')>0$.
Denote
\[
\upsilon:=\sum (a_\alpha+a_\alpha')+\sum(b_\beta+b_\beta').
\]
\begin{enumerate}\itemsep=0pt
\item[$(a)$] If $\upsilon<p+q$, then $F(k,{\rm i}s):=
\Kc{p}{q}\big[\genfrac{}{}{0pt}{}{(a|a')}{(b|b')};\tfrac{k+{\rm i}s}{2}\bigl|;\tfrac{-k+{\rm i}s}{2}\big]$
tends to $0$ as $|k+{\rm i}s|$ tends to $\infty$.
\item[$(b)$]
If $\upsilon<p+q-1$, then for each $k$ the function $F(k,{\rm i}s)$ as a
function in $s$ is integrable.
Under the same condition $F(k,{\rm i}s)$ is contained in $L^2(\Lambda)$.
\item[$(c)$]
If $\upsilon<p+q-2$, then $F(k,{\rm i}s)$ is contained in $L^1(\Lambda)$.
\end{enumerate}
\end{Corollary}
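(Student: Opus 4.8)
The plan is to reduce the whole statement to the asymptotic formula \eqref{eq:asy2}. Write $\xi:=\tfrac12(k+{\rm i}s)\in\Lambda$, so that $|\xi|=\tfrac12\sqrt{k^2+s^2}$. In the product \eqref{eq:cK} the $\alpha$-th factor is $\Gamma^\C\big(a_\alpha+\xi\big|a_\alpha'-\ov\xi\big)$, whose modulus is $\sim|\xi|^{\Re(a_\alpha+a_\alpha')-1}$ by \eqref{eq:asy2}. For the $\beta$-th factor I would note that
\[
\Gamma^\C\Big(b_\beta+\tfrac{-k-{\rm i}s}2\,\Big|\,b_\beta'+\tfrac{k-{\rm i}s}2\Big)=\Gamma^\C\big(b_\beta+\eta\big|b_\beta'-\ov\eta\big),\qquad \eta:=-\xi=\tfrac12\big((-k)+{\rm i}(-s)\big)\in\Lambda,
\]
so \eqref{eq:asy2} applies once more and gives modulus $\sim|\eta|^{\Re(b_\beta+b_\beta')-1}=|\xi|^{\Re(b_\beta+b_\beta')-1}$. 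Multiplying the $p+q$ factors yields the pointwise rate
\[
|F(k,{\rm i}s)|\ \sim\ |\xi|^{\,\upsilon-p-q}\qquad\text{as } k^2+s^2\to\infty .
\]

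The one step that really needs care is promoting this to a bound uniform in \emph{both} $k$ and $s$. The hypotheses $\Re(a_\alpha+a_\alpha')>0$, $\Re(b_\beta+b_\beta')>0$ put the left poles of \eqref{eq:cK} in $\Re\sigma<0$ and the right poles in $\Re\sigma>0$ (the remark after \eqref{eq:def}), so $F(k,\cdot)$ has no poles on ${\rm i}\R$; hence for each fixed $k$ it is continuous on $\R$, and in particular bounded on compact intervals. Combining this with the asymptotics (valid for $|\xi|$ large) produces a constant $C$ with
\[
|F(k,{\rm i}s)|\ \le\ C\,\big(1+k^2+s^2\big)^{(\upsilon-p-q)/2}\qquad\text{for all } k\in\Z,\ s\in\R .
\]
Part $(a)$ is then immediate: when $\upsilon<p+q$ the exponent is negative, so the right-hand side tends to $0$.

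For part $(b)$ I fix $k$ and use $|F(k,{\rm i}s)|\le C(1+s^2)^{(\upsilon-p-q)/2}$ (the exponent being negative), which is integrable over $s\in\R$ exactly when $\upsilon-p-q<-1$, i.e.\ $\upsilon<p+q-1$. For the $L^2(\Lambda)$ assertion put $\gamma:=p+q-\upsilon>1$ and estimate $\|F\|_{L^2(\Lambda)}^2\le C\sum_{k\in\Z}\int_\R(1+k^2+s^2)^{-\gamma}\,{\rm d}s$; the $k=0$ term is finite since $2\gamma>1$, while for $k\ne0$ the substitution $s=|k|u$ gives $\int_\R(1+k^2+s^2)^{-\gamma}\,{\rm d}s\le C|k|^{1-2\gamma}$, and $\sum_{k\ne0}|k|^{1-2\gamma}<\infty$ because $2\gamma-1>1$. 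Part $(c)$ is the same computation with the exponent halved: with $\delta:=p+q-\upsilon>2$ one gets $\int_\R(1+k^2+s^2)^{-\delta/2}\,{\rm d}s\le C|k|^{1-\delta}$ for $k\ne0$ (legitimate since $\delta>1$), and $\sum_{k\ne0}|k|^{1-\delta}<\infty$ because $\delta-1>1$; the $k=0$ term is finite since $\delta>1$.

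In short: (i) read the power $|\xi|^{\upsilon-p-q}$ off \eqref{eq:asy2} for both groups of gamma-factors; (ii) upgrade it to a uniform two-variable bound, using that \eqref{eq:cK} has no poles on ${\rm i}\R$ under the positivity hypotheses; (iii) feed this into the elementary integrals $\int_\R(k^2+s^2)^{-r}\,{\rm d}s$ and series $\sum_k|k|^{1-2r}$. Step (ii) is the only genuine obstacle; steps (i) and (iii) are routine.
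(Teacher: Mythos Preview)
Your argument is correct and is precisely the route the paper has in mind: in the text the corollary is simply stated immediately after the asymptotic formula \eqref{eq:asy2} with no separate proof, so your write-up just supplies the routine details (reading off the power $|\xi|^{\upsilon-p-q}$ from \eqref{eq:asy2} for each $\Gamma^\C$-factor and then checking the one- and two-variable integrability thresholds). Your handling of the uniformity in $(k,s)$ is the right one: the Stirling-based asymptotic \eqref{eq:asy1} is uniform for $|\xi|$ large, and only finitely many values of $k$ meet the complementary region $|\xi|\le R$, on which continuity (no poles on ${\rm i}\R$ under the positivity hypotheses) gives the bound.
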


\begin{Remark}
Formula \eqref{eq:asy1} also gives us asymptotics of
$\Kc{p}{q}$ on vertical lines $\sigma=h+{\rm i}s$. Indeed,
\[
\Gamma\big(a+\tfrac{k+h+{\rm i}s}{2}\big|a'+\tfrac{-k+h+{\rm i}s}{2}\big)=
\Gamma\big(a+\tfrac h2+\tfrac{k+{\rm i}s}{2}\big|a'+\tfrac h2+\tfrac{-k+{\rm i}s}{2}\big),
\]
and we can control integrability under shifts of the integration contour.
\end{Remark}

\subsection{Decomposition of Mellin--Barnes integrals in residues}\label{ss:meijer}
Now we start to prove Theorem~\ref{th:1}, i.e., to derive
the quadratic expressions of the func\-tions~$\Gc{p}{q}$ in terms of
the usual hypergeometric functions $\hyp{p}{q}$.

First, we write the definition \eqref{eq:def} of $\Gc{p}{q}$ in the form
\[
\sum_{k\in \Z} z^{-k/2\big|k/2} I_k(z), \qquad
I_k(z)=\frac{1}{2\pi {\rm i}}\int (\dots)|z|^{-\sigma}\,{\rm d}\sigma.
\]

The integrals $I_k(z)$ are special cases of
Mellin--Barnes integrals, i.e., integrals of the type
\begin{equation}
J^{A,B}_{C,D}(a,b,c,d;u)=\frac 1{2\pi {\rm i}}\int_L \frac{\prod\limits_{\alpha=1}^A\Gamma(a_\alpha+\sigma)
\prod\limits_{\beta=1}^B\Gamma(b_\beta-\sigma)}
{\prod\limits_{\gamma=1}^C\Gamma(c_\gamma+\sigma)\prod\limits_{\delta=1}^D\Gamma(d_\delta-\sigma)} u^{-s}
\, {\rm d}s,
\label{eq:J}
\end{equation}
where $L$ is a contour separating poles of factors
$\Gamma(a_\alpha+\sigma)$ and poles of $\Gamma(b_\beta-\sigma)$.
The behavior of such Mellin--Barnes integrals
(Meijer $G$-functions) was investigated by Meijer,
his results are exposed in \cite{BW, Luke,Mar}.
Under certain conditions $J(a,b,c,d;u)$ can be expressed in terms
of functions $\Sigma_\pm(z)$, where
$\Sigma_+(z)$ is the sum of residues of the integrand at poles of
factors $\Gamma(a_\alpha+\sigma)$, and $\Sigma_-(z)$
the sum of residues at poles of $\Gamma(b_\beta-\sigma)$.

In our case $A=D=q$, $B=C=p$, and $u=|z|$ is a positive real.
The contour $L$ coincides with the imaginary axis at infinity.
The asymptotics of the absolute value of the integrand on the imaginary axis
is
\[
\sim |\sigma|^{\sum\limits_{\alpha=1}^q\Re (a_\alpha-d_\alpha)+\sum\limits_{\beta=1}^p\Re(b_\beta- c_\beta)},
\]
see \cite[equation~(1.18(4))]{HTF1}.
One of the statements of \cite[Theorem~18]{Mar} implies that
\emph{if the integrand tends to zero on the imaginary axis,
then the integral $J(a,b,c,d;u)$ conditionally converges if
the integrand tends to $0$ at infinity for $u>0$ and is given by}
\begin{itemize}\itemsep=0pt
\item[--] \textit{$\Sigma_+(u)$ for $q>p$;}
\item[--] \textit{$\Sigma_-(u)$ for $q<p$;}
\item[--] \textit{for $p=q$ we have $\Sigma_+(u)$ if $0<u<1$ and $\Sigma_-(u)$ for $u>1$.}
\end{itemize}
If
$\sum\Re(a_\alpha-d_\alpha)+\sum\Re(b_\beta-c_\beta)<-1$,
then the absolute convergence is obvious.

In our case $a_\alpha$ is replaced by $a_\alpha+k/2$, $b_\beta$ by
$b_\beta-k/2$, and $c_\beta\mapsto 1-b'_\beta-k/2$,
$d_\alpha\mapsto 1-a'_\alpha+k/2$.
Therefore under our conditions \eqref{eq:cond1}, \eqref{eq:cond2}
all integrals $I_k$ converge.

\subsection{Evaluation of sums of residues}\label{ss:evaluation}

\begin{Lemma}
Fix $a|a'$ and consider the following family of functions:
\[
\gamma_k(\sigma):=\Gamma^\C\big(a+\tfrac {k+\sigma}2\big|a'+\tfrac
{-k+\sigma}2\big).
\]
Denote by $\Omega$ the sets of all points $(k,\sigma)\in \Z\times \C$
such that $\sigma$ is a pole of $\gamma_k(\sigma)$.
\begin{enumerate}\itemsep=0pt
\item[$(a)$] The set $\Omega$ is contained in the half-plane $\Re\sigma<0$ if and only
if $\Re(a+a')>0$.
\item[$(b)$] The set $\Omega$ consists of points
\begin{gather*}
k=-m+m'-a+a', \\
\sigma= -m-m'-a-a',
\end{gather*}
where $m$, $m'$ range in $\Z_+$.
\end{enumerate}
\end{Lemma}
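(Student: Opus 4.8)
The plan is to reduce the statement to the elementary pole structure of the classical Gamma function. By \eqref{eq:Gamma} we have $\Gamma^\C(a|a')=\Gamma(a)/\Gamma(1-a')$, so that
\[
\gamma_k(\sigma)=\frac{\Gamma\big(a+\tfrac{k+\sigma}2\big)}{\Gamma\big(1-a'+\tfrac{k-\sigma}2\big)}.
\]
Viewed as a function of $\sigma$ with $k$ fixed, the numerator $\Gamma\big(a+\tfrac{k+\sigma}2\big)$ has only simple poles, located at $\sigma=-2a-k-2m$ with $m\in\Z_+$; since $m\mapsto\sigma$ is affine with nonzero slope, distinct $m$ give distinct $\sigma$, so there are no multiple poles. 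The reciprocal of the denominator, $1/\Gamma\big(1-a'+\tfrac{k-\sigma}2\big)$, is entire and vanishes (to first order) exactly at those $\sigma$ for which $1-a'+\tfrac{k-\sigma}2\in\Z_-$.

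Consequently $\gamma_k$ has a pole at $\sigma=-2a-k-2m$ if and only if that point is not a zero of the reciprocal factor: since both the pole of the numerator and the zero of the reciprocal factor are of first order, a coincidence cancels exactly and leaves a finite nonzero value. Plugging $\sigma=-2a-k-2m$ into the denominator argument gives
\[
1-a'+\tfrac{k-\sigma}2=1+(a-a')+k+m ,
\]
which belongs to $\Z_-$ precisely when $(a-a')+k+m\le-1$. Hence the poles of $\gamma_k$ are exactly the points $\sigma=-2a-k-2m$ with $m\in\Z_+$ and $(a-a')+k+m\ge0$; all these numbers are integers because $a-a'\in\Z$.

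For part~$(b)$ it remains to re-parametrize: given such a pole, set $m':=(a-a')+k+m\in\Z_+$; then $k=m'-m-(a-a')$ and, using $a-a'\in\Z$, one gets $\sigma=-2a-k-2m=-(a+a')-m-m'$, which is the asserted form. Conversely, for any $m,m'\in\Z_+$ the point $k=m'-m-(a-a')$, $\sigma=-(a+a')-m-m'$ satisfies $a+\tfrac{k+\sigma}2=-m$ and $1-a'+\tfrac{k-\sigma}2=1+m'\notin\Z_-$, hence is a genuine pole; this yields a bijection $\Z_+\times\Z_+\to\Omega$, $(m,m')\mapsto(k,\sigma)$. Part~$(a)$ then follows at once: from $\Re\sigma=-\Re(a+a')-m-m'$ the largest real part occurring in $\Omega$ is $-\Re(a+a')$, attained at $m=m'=0$ (a point that does lie in $\Omega$), so $\Omega$ is contained in $\{\Re\sigma<0\}$ if and only if $\Re(a+a')>0$.

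The only delicate point is the cancellation bookkeeping in the second paragraph: one must be sure that a collision of a numerator pole with a denominator--Gamma pole genuinely removes the singularity of $\gamma_k$ rather than creating a double pole or an accidental zero. This is guaranteed by the simplicity of all poles of $\Gamma$ together with the fact that the two arguments $a+\tfrac{k+\sigma}2$ and $1-a'+\tfrac{k-\sigma}2$ are affine in $\sigma$ with nonzero slope. The remainder is routine manipulation of the integer $a-a'$.
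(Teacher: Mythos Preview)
Your proof is correct and rests on the same core identification as the paper: the poles of $\gamma_k$ are exactly the solutions of $a+\tfrac{k+\sigma}{2}=-m$, $a'+\tfrac{-k+\sigma}{2}=-m'$ with $m,m'\in\Z_+$. The only organizational difference is that the paper appeals directly to the already-stated pole structure of $\Gamma^\C$ (Section~\ref{ss:Gamma}, equation~\eqref{eq:res-gamma}) and proves~(a) before~(b), whereas you re-derive the pole set from the ratio $\Gamma(a)/\Gamma(1-a')$ with an explicit cancellation check and then read off~(a) from the explicit form of $\sigma$; both routes are equivalent.
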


\begin{proof} (a)
Let all poles be contained in the domain $\Re\sigma<0$.
Taking $k=a'-a$ we get
\[
\gamma_{a'-a}(\sigma)=\Gamma^\C
\big(a+\tfrac {a'-a+\sigma}2\big|a'+\tfrac {-a'+a+\sigma}2\big)=
\Gamma^\C\big(\tfrac{a+a'}2 +\tfrac{\sigma}{2}\big|\tfrac{a+a'}2
+\tfrac{\sigma}{2}\big).
\]
The point $\sigma= -(a+a')$ is a pole of $\gamma_{a'-a}(\sigma)$.
Therefore, $\Re(a+a')>0$.

Conversely, let $\Re(a+a')>0$.
Consider $(k,\sigma)\in \Omega$.
Then (see Section~\ref{ss:Gamma})
\begin{equation}
a+\tfrac{k+\sigma}2=-m\in \Z_-, \qquad a'+\tfrac{-k+\sigma}2=-m'\in \Z_-.
\label{eq:Omega}
\end{equation}
Therefore $a+a'+\sigma\in \Z_-$ and $\Im\sigma<0$.

(b) We solve the system \eqref{eq:Omega}.
\end{proof}

\begin{proof}[Evaluation of the sum of residues (the proof of Theorem~\ref{th:1})]
For definiteness assume that $p\le q$.
Let us write the residue $R(j;m,m')$ of the integrand \eqref{eq:def}
at a point
\[
k= -m+m'-a_j+a'_j,\qquad \sigma=-m-m'-a_j-a'_j.
\]
We have
\[
\tfrac{k+\sigma}{2}=-a_j-m,\qquad \tfrac{-k+\sigma}{2}=-a_j'-m',
\]
and
keeping in mind \eqref{eq:res-gamma} we get
\begin{gather*}
R(j;m,m')=\frac{2(-1)^m}{m!\,m'!}
 \prod_{\alpha\ne j}
\Gamma^\C\big(a_\alpha-a_j-m\big|a_\alpha'-a_j'-m'\big)\\
\phantom{R(j;m,m')=}{} \times
\prod_{\beta} \Gamma^\C\big(b_\beta+a_j+m\big|b_\beta'+a_j'+m'\big)
\times z^{a_j+m|a_j'+m'}.
\end{gather*}
Applying \eqref{eq:shift1} and \eqref{eq:shift2} we come to
\begin{gather*}
R(j;m,m')=2 \frac{(-1)^m}{m!\,m'!}
\prod_{\alpha\ne j} \frac{\Gamma^\C\big(
a_\alpha-a_j\big|a_\alpha'-a_j'\big)(-1)^{m}}
{(1-a_\alpha+a_j)_m(1-a_\alpha'+a_j')_{m'}} \\
\hphantom{R(j;m,m')=}{} \times
\prod_{\beta} \Gamma^\C\big(b_\beta+a_j\big|b_\beta'+a_j'\big)
b_\beta+a_j\big)_m(b_\beta'+a_j')_{m'} (-1)^{m'}
\times z^{a_j+m|a_j'+m'}.
\end{gather*}
Reordering factors, we get
\begin{gather*}
2 z^{a_j|a'_j}
\prod_{\alpha\ne j} \Gamma^\C\big(a_\alpha-a_j\big|a_\alpha'-a_j'\big)
\prod_{\beta}
\Gamma^\C\big(b_\beta+a_j\big|b_\beta'+a_j'\big) \\
\qquad{}\times
(-1)^{mq} \frac{z^m \prod\limits_{\beta}(b_\beta+a_j\big)_{m}}
{m!\prod\limits_{\alpha\ne j} (1-a_\alpha+a_j)_{m}}
 \times
(-1)^{m'p} \frac{\ov z^{m'} \prod\limits_{\beta}(b_\beta'+a_j'\big)_{m'}}
{m'!\prod\limits_{\alpha\ne j} (1-a_\alpha'+a_j')_{m'}}.
\end{gather*}

Formal calculation with series gives
\begin{gather*}
\sum_{m,m'} R(j,m,m')=
2 z^{a_j|a_j'}
 \prod_{\alpha\ne j} \Gamma^\C\big(a_\alpha-a_j\big|a_\alpha'-a_j'\big)
 \prod_{\beta}
 \Gamma^\C\big(b_\beta+a_j\big|b_\beta'+a_j'\big)
\\
\hphantom{\sum_{m,m'} R(j,m,m')=}{} \times
 \sum_m \frac{ \big((-1)^{q} z\big)^m \prod\limits_{\beta}(b_\beta+a_j\big)_m} {m!\prod\limits_{\alpha\ne j} (1-a_\alpha+a_j)_m}
 \times
 \sum_{m'} \frac{ \big((-1)^{p} \ov z\big)^{m'} \prod\limits_{\beta}(b_\beta+a_j\big)_{m'}}
 {m'!\prod\limits_{\alpha\ne j} (1-a_\alpha'+a_j')_{m'}} .
 \end{gather*}
We get a product of two hypergeometric series whose radius of convergence
is $\infty$ if $q>p$ and $1$ if $q=p$.
They are absolutely convergent in the disk of convergence,
therefore the last identity really takes place.
Therefore
\[
\sum_j \sum_{m,m'} R(j,m,m')=\Sigma^\C_+ (z).
\]
On the other hand the absolute convergence allows us to write
 \begin{equation}
 \sum_{m,m'} R(j,m,m')=\sum_{k\in\Z}\,\,
\sum_{m,m'\colon m-m'=k} R(j,m,m')=\sum_{k\in\Z} (z/\ov z)^{k/2} I_k.
 \label{eq:sum-R}
 \end{equation}
Thus we proved the coincidence of $\Gc{p}{q}$ and $\Sigma_+^\C$.
The summation of residues at the right poles of
$_p^{\vphantom{\C}}\cK^\C_q$ is similar.

It is important that the convergence of the series $\sum\limits_{k\in\Z}$
in~\eqref{eq:sum-R} is locally uniform in the parameters
$a_\alpha$, $b_\beta$ near any point $(a_0)\in\C^q$, $(b_0)\in\C^p$,
for which the coefficients at $z^l\ov z^{l'}$ are well-defined.
\end{proof}

\subsection{The application of the Mellin transform}
Our next purpose is Theorem~\ref{cor:2} about evaluations
of the convolution integrals
\[
\int_{\C} \Gc{p}{q}(\dots;z)\, \Gc{r}{s}(\dots; t/z) \,\frac{\dd z}{z^{1|1}}.
\]
Under the condition of Corollary~\ref{cor:L2} the integral \eqref{eq:def}
defining a $\Gc{p}{q}$-function is the inverse Mellin transform
 of the function $\Kc{p}{q}$.
Therefore the function $\Kc{p}{q}$ is the direct Mellin transform
of $\Gc{p}{q}$.
This is the statement of Lemma~\ref{cor:1}.

The Mellin transform is the Fourier transform
on the group $\C^\times$, therefore it sends multiplicative convolutions
to products. This property remains to be valid if both functions are contained
 in $L^2$, see \cite[Theorem 64, Section~3.13]{Tit}.\footnote{{\bf Proof.} Denote by $C_0$ the space of continuous functions that have
 zero limit at infinity. The product is a~continuous operation $L^2\times L^2\to L^1$. The Fourier transform
 sends $L^2\to L^2$ bijectively, and sends $L^1\to C_0$.
 The convolution is a continuous operation $L^2\times L^2\to C_0$.
 Therefore the Fourier transform sends a product of two $L^2$-functions
 to the convolution.} This implies Theorem~\ref{cor:2}.

\subsection{The differential equations}
Here we derive the system \eqref{eq:differential-eq}
of differential equations for a function $\Gc{p}{q}$
(the statement of Corollary~\ref{cor:differential-eq}).
We can use the explicit expressions $\Sigma_\pm^\C$ for
$\Gc{p}{q}$ obtained in Theorem~\ref{th:1}.
The formula \eqref{eq:Xi} for $\Sigma_+^\C$ has the form
\[
\sum C_j \Phi_j(z)\cdot \Psi_j(\ov z),
\]
where $C_j$ do not depend on $z$ and
\[
\Phi_j(z)= z^{a_j} \hyp{p}{q-1}
\left[\genfrac{}{}{0pt}{}{(b_\alpha+a_j)}{(1-a_\alpha+a_j)_{\setminus j}};
(-1)^{p+q} z\right].
\]
The equation $\cD F=0$ of the system \eqref{eq:differential-eq}
is slightly modified equation for generalized hypergeometric functions,
see \cite[Section~2.1.2]{Sla} or \cite[Section~5.2]{Luke}.
The functions $\Phi_j(z)$ form a fundamental system of solutions
of this equation in holomorphic functions on~$\C$.

This implies Corollary~\ref{cor:differential-eq}.

\begin{Remark}
The statement also follows from the
identities \eqref{eq:A-diff1}--\eqref{eq:A-diff4} established below.
\end{Remark}

\subsection{The analytic continuation}
It remains to prove Proposition~\ref{pr:continuation} about the analytic
continuation of the function
$\Gc{p}{q}\big[\genfrac{}{}{0pt}{}{(a|a')}{(b|b')};z\big]$
in the variable $z$ and in the parameters $(a|a')$, $(b|b')$.

\textbf{(1).}
Assume $p\ne q$, for definiteness let $p<q$.
First, we omit the condition \eqref{eq:cond1}.
A value of $\Gc{p}{q}$ at $z$ is defined by an infinite sum of contour
integrals~$I_k$ and the convergence is locally
uniform in the parameters.
For all but a finite number of summands the imaginary axis ${\rm i}\R$
separates the left poles and right poles of the integrand
(see the terminology of Section~\ref{ss:def}).
Consider one of the remaining summands, say $I_m$.
A contour~$L_m$ separating left poles and right poles
of~\eqref{eq:def} exists if and only if left poles differ from right poles.
The condition \eqref{eq:collision} is the condition of collision of
left and right poles.
Two separation contours $L_m$, $L_m'$ can be nonhomotopic.
However
\[
\frac{1}{2\pi {\rm i}}\left(\int_{L_m}-\int_{{\rm i}\R}\right)
\]
is a sum of residues at right poles contained in the half-plane
$\Re\sigma<0$ minus a sum of residues at left poles contained in
the half-plane $\Re\sigma>0$.
This expression does not depend on a choice of separation contour.
Clearly, a contour integral is holomorphic in the parameters.

So the sum of contour integrals is well-defined in the domain
defined by conditions \eqref{eq:cond3}, \eqref{eq:cond4}
(absolute convergence and absence of collisions).

On the other hand the generalized hypergeometric functions
$\hyp{p}{q-1}$ meromorphically depend on their parameters,
and therefore $\Sigma_+(z)$ meromorphically depends
on parameters and this allows to omit the condition \eqref{eq:cond2}
for the convergence. Sums of power series are real analytic and this implies
real analyticity of $\Sigma_+(z)$ for $z\in\C\setminus 0$.

Possible poles are located at hyperplanes
\[
\begin{cases}
a_j+b_\beta\in \Z_-,\\
a_j'+b_\beta'\in \Z_-,
\end{cases}
\]
for some $j$ and $\beta$ and
\[
\begin{cases}
a_\alpha-a_j\in \Z, \\
a_\alpha'-a_j'\in \Z,
\end{cases}
\]
for some $j$, $\alpha$.
It remains to notice that singularities of the second type are removable
under the open condition \eqref{eq:cond3} and therefore they are always
removable.

\textbf{(2).} Let $p=q$. The same argument can be applied
for $|z|< 1$ and $|z|>1$, it remains to examine the function
on the circle $|z|=1$.
We extend $\Gc{p}{q}(z)$ to the same domain of parameters as above
and come away to a smaller domain\footnote{So we assume $p>2$.
For the case $p=2$ the analyticity on the circle is clear
from explicit formulas, see \cite{MN}; the case
$p=1$ is trivial, see \eqref{eq:1G1} below.}
\begin{equation}
\sum\Re(a_\alpha+a_\alpha')+\sum\Re(b_\beta+b_\beta')< p-2.
\label{eq:eshcho}
\end{equation}
Then the contour integral in \eqref{eq:def} has continuous partial
derivatives up to order $p$.
By continuity the equation $\cD F=0$
is valid on the circle $|z|=1$.
The operator $\cD$ is elliptic for $z\ne 0$, $\pm 1$.
Therefore under the condition
\eqref{eq:eshcho} solutions of the equation $\cD F=0$ are analytic
(see, e.g., \cite[Theorem 8.5.1]{Her}).
Consider a point $z_0\ne 1$ on the circle $|z|=1$.
Any generalized hypergeometric function
defined in the disk $|z|<1$ has an analytic continuation to a
neighborhood $U$ of $z_0$.
Therefore the expression \eqref{eq:Xi} for $\Gc{p}{q}(z)$ provides us
an analytic continuation
of $\Sigma_+(z)$ to $U$ (and coincides with $\Sigma_-^\C$ in the intersection
of $U$ and the domain $|z|>1$). This expression is meromorphic in $(a)$, $(b)$ as above.

\section{Additions}\label{section3}

\subsection{Some simple cases}

\subsubsection{The exponential}
For $p=0$, $q=1$ formula (\ref{eq:Xi}) gives
 \begin{equation}
\Gc{0}{1}\left[\genfrac{}{}{0pt}{}{a|a'}{\text{--}};z\right]
=2 z^{a|a'} {\rm e}^{-z} {\rm e}^{\ov z}.
\label{eq:1G0}
\end{equation}

\subsubsection{The power function}
For $p=q=1$ we get
\begin{align}
\Gc{1}{1}\left[\genfrac{}{}{0pt}{}{a|a'}{b|b'};z\right]
&=2 z^{a|a'} \Gamma^\C(a+b|a'+b')
\hyp{1}{0}\left[\genfrac{}{}{0pt}{}{a+b}{\text{--}};-z\right]
\cdot
\hyp{1}{0}\left[\genfrac{}{}{0pt}{}{a'+b'}{\text{--}};-\ov z\right] \notag\\
&=2\Gamma^\C(a+b|a'+b')\, z^{a|a'} (1+z)^{-a-b|-a'-b'}.\label{eq:1G1}
\end{align}
The series $\hyp{1}{0}$ converge in the disc $|z|<1$, but the final
expression is well defined in $\C\setminus\{0,-1\}$.

\subsubsection{The Kummer functions}
For $p=1$, $q=2$ we get
\begin{gather}
\Gc{1}{2}\left[\genfrac{}{}{0pt}{}{a_1|a_1',a_2|a_2'}{b|b'};z\right]
=2 \Gamma^\C(a_1+b|a_1'+b)\,\Gamma^\C(a_2-a_1|a_2'-a_1')\nonumber\\
\hphantom{\Gc{1}{2}\left[\genfrac{}{}{0pt}{}{a_1|a_1',a_2|a_2'}{b|b'};z\right]=}{}
\times z^{a_1}
\hyp{1}{1}\left[\genfrac{}{}{0pt}{}{a_1+b}{1-a_2+a_1};z\right]
 \ov z^{a_1'}
\hyp{1}{1}\left[\genfrac{}{}{0pt}{}{a_1'+b'}{1-a_2'+a_1'};-\ov z\right]\label{eq:F11} \\
\hphantom{\Gc{1}{2}\left[\genfrac{}{}{0pt}{}{a_1|a_1',a_2|a_2'}{b|b'};z\right]=}{}
+ \big\{\text{similar term obtained by the transposition $a_1|a_1'\longleftrightarrow a_2|a_2'$}\big\}.\nonumber
\end{gather}
Denote the hypergeometric functions in these expression by
$\Phi_1(z)$, $\Phi_1'(\ov z)$, $\Phi_2(z)$, $\Phi_2'(\ov z)$.
Then $z^{a_1}\Phi_1(z)$,
$z^{a_2}\Phi_2(z)$ is a fundamental system of holomorphic solutions of
the equation $\cD F=0$ of the system~\eqref{eq:differential-eq},
and $\ov z^{a_1'}\Phi_1'(\ov z)$,
$\ov z^{a_2'}\Phi_2'(\ov z)$ is a fundamental system of antiholomorphic
solutions of the equation $\ov\cD F=0$
(generally, all these functions are ramified at $0$ and $\infty$).
Then the functions
\[
z^{a_1}
\Phi_1(z) \,\ov z^{a_1'}\Phi_1'(\ov z),\quad
z^{a_2}\Phi_2(z)\, \ov z^{a_1'}\Phi_1'(\ov z),
\quad z^{a_1}\Phi_1(z)\, \ov z^{a_2'}\Phi_2'(\ov z),\quad
z^{a_2}\Phi_2(z)\,\ov z^{a_2'}\Phi_2'(\ov z)
\]
is a basis of the space of solutions of the system \eqref{eq:differential-eq}
in a neighborhood of any point $z_0\ne 0$, see \cite[Proposition~3.8]{MN}.
For $a_1-a_2\notin \Z$ solutions non-ramified at $0$ and $\infty$
have the form
\begin{equation}
C_1\, z^{a_1|a_1'}
\Phi_1(z)\Phi_1'(\ov z)+C_2\,z^{a_2|a_2'}
\Phi_2(z)\Phi_2'(\ov z).
\label{eq:lin-comb}
\end{equation}
The asymptotic expansion of confluent hypergeometric function
$\hyp{1}{1}(z)$ as $z\to\infty$ (see, e.g., \cite[Section 4.7]{Luke})
in the sector $|\arg z|<\pi-\epsilon$ is
\begin{gather*}
\hyp{1}{1}\left[\genfrac{}{}{0pt}{}{a}{b};z\right]
 =\frac{\Gamma(b)}{\Gamma(b-a)} {\rm e}^{{\rm i}\pi \sgn (\Im a)} z^{-a}
\left(\sum_{n=0}^{K-1} \frac{(a)_n(1+a-b)_n}{n!} (-z)^{-n} +O\big(z^{-K}\big)
\right) \\
\hphantom{\hyp{1}{1}\left[\genfrac{}{}{0pt}{}{a}{b};z\right]=}{}
+\frac{\Gamma(b)}{\Gamma(a)} e^z z^{a-b}\left(
\sum_{n=0}^{L-1} \frac{(b-a)_n(1-a)_n}{n!} z^{-n}+O\big(z^{-L}\big)\right).
\end{gather*}
For generic $C_1$, $C_2$ the growth of expression \eqref{eq:lin-comb}
as $|\Re z|\to \infty$ is exponential.
For the linear combination \eqref{eq:F11} all exponential terms of the
asymptotics disappear (and ratio of coefficients in~\eqref{eq:F11}
is uniquely defined by this condition, existence of such ratio a priori
is non-obvious).
A reminder is $O\big(z^{-N} {\rm e}^{z}\big)+O\big(z^{-N} {\rm e}^{-z}\big) +O(1)$ for any $N$,
is too rough.
However, under the conditions $\Re(a_1+a_1')>0$, $\Re(a_2+a_2')>0$,
$\Re(a_1+a_1'+a_2+a_2')<1$ our function $\Gc{2}{1}$ is contained in
$L^2\big(\C,|z|^{-2}\dd z\big)$.

\subsubsection{The Bessel functions}
For $p=0$, $q=2$ we get an expression
\begin{gather*}
 \Gc{0}{2}\!\left[\genfrac{}{}{0pt}{}{a_1|a_1',a_2|a_2'}{\text{--}};z\right]
 =2 \Gamma^\C(a_2-a_1|a_2'-a_1') z^{a_1|a_1'}
\,
\hyp{0}{1}\!\left[\genfrac{}{}{0pt}{}{\text{--}}{1+a_1-a_2};z\right]
\hyp{0}{1}\!\left[\genfrac{}{}{0pt}{}{\text{--}}{1+a_1'-a_2'};\ov z\right] \\
\hphantom{\Gc{0}{2}\!\left[\genfrac{}{}{0pt}{}{a_1|a_1',a_2|a_2'}{\text{--}};z\right]=}{}
+\big\{\text{similar term obtained by the transposition
$a_1|a_1'\longleftrightarrow a_2|a_2'$}\big\}.
\end{gather*}

\subsubsection{The Gauss hypergeometric functions}
For $p=q=2$
\begin{gather*}
\Gc{2}{2}\left[\genfrac{}{}{0pt}{}{a_1|a_1',a_2|a_2'}
{b_1|b_1',b_1|b_1};z\right]
=z^{a_1|a_1'} \Gamma^\C(a_2-a_1|a_2'-a_1')
\prod_{\beta=1}^2\Gamma^\C(b_\beta+a_1|b_\beta'+a_1') \\
\hphantom{\Gc{2}{2}\left[\genfrac{}{}{0pt}{}{a_1|a_1',a_2|a_2'}
{b_1|b_1',b_1|b_1};z\right]=}{} \times
\hyp{2}{1}\left[\genfrac{}{}{0pt}{}{b_1+a_1,b_2+a_1}{1+a_1-a_2};z\right] \,
\hyp{2}{1}\left[\genfrac{}{}{0pt}{}{b_1'+a_1',b_2'+a_1'}{1+a_1'-a_2'};z\right] \\
\hphantom{\Gc{2}{2}\left[\genfrac{}{}{0pt}{}{a_1|a_1',a_2|a_2'}
{b_1|b_1',b_1|b_1};z\right]=}{}
+\big\{\text{similar term obtained by the transposition
$a_1|a_1'\longleftrightarrow a_2|a_2'$}\big\}.
\end{gather*}
On the other hand Gelfand, Graev and Retakh \cite{GGR} defined
the analog of the Gauss hypergeometric function by the Euler integral
(see the detailed examination in \cite[Section~3]{MN}):
\begin{gather}
\hypc{2}{1}\left[\genfrac{}{}{0pt}{}{A|A', B|B'}{C|C'};z\right]:=
\frac{\Gamma^{\C}(B|B')}{\Gamma^{\C}(B|B')\,\Gamma^{\C}(C-B|C'-B')} \nonumber\\
\hphantom{\hypc{2}{1}\left[\genfrac{}{}{0pt}{}{A|A', B|B'}{C|C'};z\right]:=}{} \times \int_\C t^{B-1|B'-1}(1-t)^{C-B-1|C'-B'-1}(1-zt)^{-A|-A'} \dd t.
\label{eq:Euler}
\end{gather}
We have
\[
\hypc{2}{1}\left[\genfrac{}{}{0pt}{}{A|A', B|B'}{C|C'};z\right]
=\frac{\Gamma(C|C')(-1)^{C-C'}}{\Gamma(A|A')\,\Gamma(B|B')}\,\,
\hyp{2}{2}\left[\genfrac{}{}{0pt}{}{0|0, 1-C|1-C'}{A|A', B|B' };z\right].
\]
We do not know which notation is better. In any case, for the notation
$\hypc{2}{1}$ formulas are precisely parallel to the classical
theory of the Gauss hypergeometric functions.

As for the Kummer and Bessel cases the system \eqref{eq:differential-eq}
has $4$-dimensional space of local solutions
and a two-dimensional
subspace of solutions that are non-ramified at $0$.
The function $\hypc{2}{1}$ is selected from this subspace by the condition
of non-ramification at $z=1$, see~\cite[Proposition~3.11]{MN}.

\subsection[Some simple properties of the functions $\Gc{p}{q}$]{Some simple properties of the functions $\boldsymbol{\Gc{p}{q}}$}\label{ss:simple}

Here we imitate properties of the Meijer $G$-function
(see Prudnikov, Brychkov and Marichev \cite[Vol.~3, Section 8.2]{PBM}).
Clearly, the function
$\Gc{p}{q}$ is symmetric with respect to permutations of
$a_1|a_1',\dots,a_p|a_p'$ and of $b_1|b_1',\dots,b_q|b_q'$.
If $a_m-a_l\in \Z$, then we have an additional symmetry
\begin{equation*}
\Gc{p}{q}\left[\genfrac{}{}{0pt}{}{(a|a')}{(b|b')};z\right]
=\Gc{p}{q}\left[\genfrac{}{}{0pt}{}{(a|a')_{\setminus m,l},a_m|a_l',a_m'|a_l}
{(b|b')};z\right],\qquad\text{if $a_k-a_l\in\Z$}
\end{equation*}
(if $a_k-a_l\notin\Z$, then the right hand side makes no sense),
this follows from \eqref{eq:Gamma}.

Changing the variables $k\mapsto -k$ and $\sigma\mapsto-\sigma$
in \eqref{eq:def} we get
\begin{equation}
\Gc{p}{q}\left[\genfrac{}{}{0pt}{}{(a|a')}{(b|b')};z\right]
=\Gc{q}{p}\left[\genfrac{}{}{0pt}{}{(b|b')}{(a|a')};z^{-1}\right].
\label{eq:symmetry1}
\end{equation}
Changing only the summation index $k\mapsto -k$, we get
\[
\Gc{p}{q}\left[\genfrac{}{}{0pt}{}{(a|a')}{(b|b')};z\right]
=(-1)^{\sum(a_\alpha-a_\alpha')+\sum(b_\beta-b_\beta')} \,
\Gc{p}{q}\left[\genfrac{}{}{0pt}{}{(a|a')}{(b|b')};\ov z\right].
\]
Keeping in the mind the reflection formula (\ref{eq:gamma-complement}), we get
\begin{equation}
\Gc{p+1}{q+1}\left[\genfrac{}{}{0pt}{}{(a|a'),}{(b|b'),}
\genfrac{}{}{0pt}{}{c|c'}{1-c|1-c'};z\right]
=(-1)^{c-c'}\,
\Gc{p}{q}\left[\genfrac{}{}{0pt}{}{(a|a')}{(b|b')};z\right].
\label{eq:A-cancelation}
\end{equation}
Shifting variables $k\mapsto k+l$, $\sigma\mapsto\sigma+\tau$,
we come to
\begin{equation}
z^{c|c'}
\Gc{p}{q}\left[\genfrac{}{}{0pt}{}{(a|a')}{(b|b')};z\right]
=\Gc{p}{q}\left[\genfrac{}{}{0pt}{}{(a|a')+c|c'}{(b|b')-c|c'};z\right].
\label{eq:shift}
\end{equation}

Keeping in mind \eqref{eq:gamma-complement}, we obtain
\begin{gather}
\Gc{mp}{mq}\left[\genfrac{}{}{0pt}{}
{(a|a'),(a|a')+\frac1m|\frac1m,\dots,(a|a')+\frac{m-1}m|\frac{m-1}m}
{(b|b'),(b|b')+\frac1m|\frac{1^{\vphantom{R}}}m,\dots,(b|b')+\frac{m-1}m|\frac{m-1}m};z\right] \nonumber\\
\qquad{} =m^{p+q-2-\sum(a_\alpha+a_\alpha')-\sum(b_\beta+b_\beta')}
\sum_{l=0}^{m-1}
\Gc{p}{q}\left[\genfrac{}{}{0pt}{}{(ma|ma')}{(mb|mb')};
{\rm e}^{2\pi {\rm i} l/m} z^{1/m}m^{mp(-q)}\right],
\label{eq:A-product}
\end{gather}
here we have a summation, which is absent for the classical Meijer $G$-function.

Differentiating the integral by the parameter $z$ we get
\begin{subequations}
\begin{gather}
\left({-}z\frac\partial{\partial z}+a_j\right)\,
\Gc{p}{q}\left[\genfrac{}{}{0pt}{}{(a|a')}{(b|b')};z\right]
 =\Gc{p}{q}\left[\genfrac{}{}{0pt}{}{(a|a')_{\setminus j},(a_j+1)|a_j'}
{(b|b')};z\right],
\label{eq:A-diff1} \\
\left(z\frac\partial{\partial z}+b_m\right)\,
\Gc{p}{q}\left[\genfrac{}{}{0pt}{}{(a|a')}{(b|b')};z\right]
 =\Gc{p}{q}\left[\genfrac{}{}{0pt}{}{(a|a')}
{(b|b')_{\setminus m},(b_m+1)|b_m'};z\right].
\label{eq:A-diff2}
\end{gather}
\end{subequations}
Due to the $(-1)$ in \eqref{eq:shift1}, the similar equations for
$\ov z\frac{\partial}{\partial \ov z}$ differ from
\eqref{eq:A-diff1} and \eqref{eq:A-diff2} by signs
\begin{subequations}
\begin{gather}
\left({-}\ov z\frac\partial{\partial \ov z}+a_j'\right)\,
\Gc{p}{q}\left[\genfrac{}{}{0pt}{}{(a|a')}{(b|b')};z\right]
 =- \Gc{p}{q}\left[\genfrac{}{}{0pt}{}{(a|a')_{\setminus j},a_j|(a_j'+1)}
{(b|b')};z\right],
\label{eq:A-diff3} \\
\left(\ov z\frac\partial{\partial \ov z}+b'_m\right)\,
\Gc{p}{q}\left[\genfrac{}{}{0pt}{}{(a|a')}{(b|b')};z\right]
 =-
\Gc{p}{q}\left[\genfrac{}{}{0pt}{}{(a|a')}
{(b|b')_{\setminus m},b_m|(b_m'+1)};z\right].
\label{eq:A-diff4}
\end{gather}
\end{subequations}

The last four equalities \eqref{eq:A-diff1}--\eqref{eq:A-diff4}
imply the differential equations \eqref{eq:differential-eq}--\eqref{eq:ovcD}.

Also
\begin{gather*}
\Gc{p}{q}\left[\genfrac{}{}{0pt}{}{(a|a')_{\setminus j}, (a_j+1)|a_j'}
{(b|b')};z\right]
- \Gc{p}{q}\left[\genfrac{}{}{0pt}{}{(a|a')_{\setminus m}, (a_m+1)|a_m'}
{(b|b')};z\right] \\
\qquad{} =(a_j-a_m) \, \Gc{p}{q}\left[\genfrac{}{}{0pt}{}{(a|a')}{(b|b')};z\right],
\end{gather*}
and
\begin{gather*}
\Gc{p}{q}\left[\genfrac{}{}{0pt}{}{(a|a')_{\setminus j}, (a_j+1)|a_j'}
{(b|b')};z\right]
+ \Gc{p}{q}\left[\genfrac{}{}{0pt}{}{(a|a')}
{(b|b')_{\setminus m}, (b_m+1)|b_m'};z\right] \\
\qquad{} =(a_j+b_m) \,
\Gc{p}{q}\left[\genfrac{}{}{0pt}{}{(a|a')}{(b|b')};z\right].
\end{gather*}

\subsection{References, links and problems}\label{ss:generalities}

\textbf{(1).}
Gauss hypergeometric functions of the complex field (our $\Gc{2}{2}$
modulo a simple factor)
were defined by Gelfand, Graev and Retakh in~\cite{GGR}
by the Euler integral \eqref{eq:Euler}.
Many formulas for such functions were obtained in \cite[Section~3]{MN}.

 \textbf{(2).} Marichev \cite{Mar} proposed the following trick, which became the main tool
in the creation of the Prudnikov, Brychkov and Marichev
tables \cite{PBM},\footnote{See also the tables \cite{BMSa}.}
see also comments in \cite{PBM-VINITI}.
Many special functions (and many elementary functions)
are special cases of the Meijer $G$-functions, i.e.,
can be written as Mellin--Barnes integrals \eqref{eq:J}.
Therefore they are inverse Mellin transforms of products
\begin{equation}
\frac{\prod\limits_{\alpha=1}^A\Gamma(A_\alpha+\sigma)
\prod\limits_{\beta=1}^B\Gamma(B_\beta-\sigma)}
{\prod\limits_{\gamma=1}^C\Gamma(C_\gamma+\sigma)
\prod\limits_{\delta=1}^D\Gamma(D_\delta-\sigma)}.
\label{eq:product}
\end{equation}
Take two such functions $\Phi(x)$, $\Psi(x)$. Then
we can evaluate the convolution
\[
\Theta(x)=\int_0^\infty \Phi(y)\Psi(x/y)y^{-1} \,{\rm d}y.
\]
Indeed, the Mellin transform of $\Theta$ is the product of
Mellin transforms, therefore we get a product of two functions
of the type \eqref{eq:product}, i.e., a function
of the same type. Now we can express $\Theta(x)$ as a linear
combination of hypergeometric functions.

Numerous formulas in tables of integrals (such as Gradshteyn and
Ryzhik \cite{GR})
whose initial derivations were ingenious can be obtained in this
straightforward way.
The table of evaluations of $G$-functions in Prudnikov, Brychkov and
Marichev \cite[Vol.~3, Section~8.4]{PBM} contains 90 pages,
for each pair of lines we can apply this trick.\footnote{Chapter~7 of the
same book (160 pages) also provides us a material for this business.}

Our Theorem~\ref{cor:2} with formulas
\eqref{eq:symmetry1}, \eqref{eq:A-cancelation}, \eqref{eq:shift}
gives us the same tool.\footnote{Our arguments are not sufficient
for integrals \eqref{eq:convolutions} with functions
$\Gc{1}{0}$, see~\eqref{eq:1G0}. Apparently, the formula~\eqref{eq:convolutions} remains valid in this case.}
However, in our case the picture is less sophisticated.
The classical Meijer functions depend on $4$ subscripts and
superscripts (see~\eqref{eq:J}),
In our case the reflection formula~\eqref{eq:gamma-complement}
allows to move $\Gamma^\C$-factors from the denominator to numerator.
As a result, functions $\Gc{p}{q}$ depend only on two subscripts $p$, $q$.

Apparently, most\footnote{With some exceptions, for instance an application
of formula \eqref{eq:A-product} can be problematic.}
of identities for classical hypergeometric functions as
they are exposed in \cite{HTF1,HTF2} (Chapters 2, 4, 6, 7),
\cite{AAR, Sla} have counterparts for functions $\Gc{p}{q}$, but
different classical formulas can have the same counterpart
(for instance the ${_5H_5}$-Dougall formula and the de Branges--Wilson
integral correspond to one integral over $\Z\times \R$, see~\cite{Ner-doug}).

\textbf{(3).}
A collection of beta-integrals involving products
of $\Gamma^\C$ is known, see Bazhanov, Man\-ga\-zeev and Segeev~\cite{BMS},
Kels \cite{Kel1,Kel2}, Derkachov, Manashov and Valinevich
\cite{DM1,DM2,DMV1,DMV2},
Nere\-tin~\cite{Ner-doug}, Sarkissian and Spiridonov \cite{SS},
in particular, this collection contains counterparts of the
de Branges--Wilson integral and the Nassrallah--Rahman integral.
Such integrals can be regarded as evaluations
of functions $\Gc{p}{p}(z)$ at the point $(-1)^p$.

\textbf{(4).}
Such beta integrals and such hypergeometric functions
arise as limits of elliptic beta-integrals and the hypergeometric
functions $\Gc{p}{p}$ as limits of elliptic hypergeometric functions,
see Sarkissian and Spiridonov \cite{SS}.

\textbf{(5).}
The classical expansion in Jacobi polynomials has
a well-known continuous analog known under terms
`Olevski transform', `generalized Mehler--Fock transform',
`Jacobi transform', see, e.g., \cite{Koo};
there is also a second continuous analog \cite{Ner-Jacobi}.
The paper Molchanov and Neretin~\cite{MN} contains a
complex counterpart of these $3$ transformations
(expansions in the Jacobi polynomials
and two integral operators), it is a unitary integral transform
with $\Gc{2}{2}$-kernel
acting from a~certain weighted $L^2$ on $\C$ to a certain weighted~$L^2$ on $\Lambda\simeq\Z\times \R$.

The `Jacobi transform' is a representative of a big zoo of
hypergeometric integral transforms (see, e.g.,~\cite{Yak}),
it is natural to think that their counterparts
exist in $\Gc{p}{q}$-cases.
Integral transforms also can be applied as a tool for the production
of special-functional identities (clearly, several transforms were used
for the creation of \cite[Vol.~3]{PBM}, on possibilities
of the Jacobi transform, see \cite{Ner-w}).
An example of application of the $\Gc{2}{2}$-transform
is contained in~\cite{Ner-doug}.

\textbf{(6).}
It is well-known that representation theory of the group
$\SL(2,\R)$ is closely related to theory of hypergeometric functions
(as the Bessel functions, the confluent hypergeometric functions,
the Gauss hypergeometric functions, $\hyp{3}{2}(1)$, and the balanced
$\hyp{4}{3}(1)$). Application of harmonic analysis
related to the Lorentz group\footnote{See old works Gelfand, Graev and
Vilenkin \cite{GGV}, Naimark \cite{Nai1,Nai2,Nai3},
Gelfand and Graev \cite{GG} on $\SL(2,\C)$-harmonic analysis,
see also Derkachov, Korchemsky and Manashov \cite{DKM,DM-plus}.} $\SL(2,\C)$ to special functions are far not so
popular (at least among pure mathematicians).
However, if to ask such a question, then hypergeometric functions
of complex field come thick and fast.

--- a tensor product $\rho_1\otimes\rho_2$ of two irreducible unitary
representations of $\SL(2,\C)$ is a multiplicity free direct integral
(see Naimark \cite{Nai1,Nai2,Nai3}).
Therefore we can canonically decompose a triple tensor product
\[
(\rho_1\otimes\rho_2)\otimes \rho_3=\rho_1\otimes(\rho_2\otimes \rho_3)
\]
in two ways.
In the first case we decompose $\rho_1\otimes\rho_2$ and multiply each
component by $\rho_3$, in the second case we start from $\rho_2\otimes \rho_3$. Realizing this idea\footnote{It is difficult to extend this approach
to unitary representations of other groups since decompositions of
tensor products usually have multiplicities $>1$ (even for $\SL(2,\R)$).
Several multiplicity free cases were examined in
\cite{Gro1,Ism1,Ism2, Ism0,Ism3}.}
we get two explicit decompositions
of the same representation, the intertwining operator splits
into a direct integral of intertwining operators acting in isotypic components,
such operators can be regarded as counterparts
of Racah coefficients ($6j$-symbols).
Ismagilov \cite{Ism1,Ism2}
(see also Derkachov iand Spiridonov \cite{DS}) showed that such
`Racah operators' are integrals transforms whose kernels have a form
$\Gc{4}{4}(1)$.
The `Racah operators'
are $G^\C$-counterparts of expansions in Racah polynomials, expansions in
Wilson polynomials, and the `Wilson function transforms' defined by
Groenevelt \cite{Gro,Gro1}.

--- Recall that the Lorentz group $\SL(2,\C)$ is locally isomorphic to
the complex orthogonal group $\SO(3,\C)$. Consider the symmetric space
$\SO(3,\C)/\SO(2,\C)$, it can be regarded as a~quadric $x^2+y^2+z^2=1$ in $\C^3$, or the complexification of
the sphere $x^2+y^2+z^2=1$ in~$\R^3$. Under the complexification,
the Laplacian on the real sphere splits into two commuting
Laplacians, one is holomorphic, another is anti-holomorphic.
A question about their joint spectral decomposition in a space
of $\SO(2)$-invariant functions leads to
${_2G_2}$-transform considered in Molchanov and Neretin~\cite{MN}.

The shortest way of appearance of $\Gc{2}{2}$-functions is discussed
in Section~\ref{ss:Vilenkin} below.

\textbf{(7).}
Dotsenko and Fateev \cite{DF} obtained a complex version
of the Selberg integral; Derkachov, Manashov and Valinevich
\cite{DMV1,DMV2} obtained multi-dimensional beta-integrals
with products of $\Gamma^\C$-functions
(counterparts of the Gustafson's extension of the second Barnes lemma).

There arises a question about multi-dimensional symmetric
$\C$-counterparts of the Heckman--Opdam hypergeometric functions \cite{HO}.
The obvious candidates are spherical distributions on symmetric spaces
$G_\C/K_\C$, where $G_\C$ is a complex semisimple Lie group and $K_\C$
a complex symmetric subgroup.

On the other hand there are no reasons to hope
that in the multi-dimensional case complex spherical transforms are
unitary operators (unexpectedly, the radial parts of Laplace operators can
be non-commuting in the Nelson sense, see \cite[Theorem 1.1]{MN}).

\subsection[The Vilenkin model for $\SL(2,\C)$]{The Vilenkin model for $\boldsymbol{\SL(2,\C)}$}\label{ss:Vilenkin}
First, we modify notation.
Let us denote elements of $\Lambda_\C$ by bold letters,
denote $a|a'$ by $\bfa$, $1|1$ by $\1$, and $(-1)^{a-a'}$ by $(-1)^\bfa$.
Since $\Lambda\simeq \Z\times \R$, we can denote
\[
\int_\Lambda(\dots)=\sum_{k\in\Z} \int_\R (\dots).
\]

Now let us explain how the hypergeometric functions $\hypc{2}{1}$
arise from representations of the Lorentz group $\SL(2,\C)$.
Recall that this group can be realized as the group of all complex
$2\times 2$ matrices
$\big(\begin{smallmatrix} a&b\\c&d \end{smallmatrix}\big)$
with determinant $ad-bc=1$.
The \emph{principal series of representations} $T_\sigmA$ of this group
is parametrized by $\sigmA=\sigma|\sigma'\in\Lambda_\C$.
They act in the space of functions on $\C$ by operators
\[
T_\sigmA\begin{pmatrix} a&b\\c&d \end{pmatrix}
f(z):= f\left(\frac{b+zd}{a+zc}\right) (a+zc)^{-\1+\sigmA}.
\]
For $\sigma\in\Lambda$ we get unitary representations in $L^2(\C)$,
for details, see \cite[Chapter~III]{GGV}.

Let us realize the representations $T_\sigmA$ in a space of functions
on $\Lambda$ conjugating them by the Mellin transform\footnote{Cf.\
a model of representations of $\SL(2,\R)$ in Vilenkin
\cite[Section~VII.3]{Vil}.}
\[
\wt T_\sigmA\begin{pmatrix} a&b\\c&d \end{pmatrix} =
\cM \circ T_\sigmA\begin{pmatrix} a&b\\c&d \end{pmatrix}
\circ \cM^{-1}.
\]
A straightforward calculation shows that
\[
\wt T_\sigmA\begin{pmatrix} a&b\\c&d \end{pmatrix} F(\mu)
=\frac{1}{4\pi^2 {\rm i}} \int_\Lambda \cL\left[\mU,\lambdA;\begin{pmatrix}
 a&b\\c&d \end{pmatrix} \right] \,F(\lambdA)\,{\rm d}\lambdA,
\]
where
\[
\cL\left[\mU,\lambdA;\begin{pmatrix} a&b\\c&d \end{pmatrix} \right]=
\int_\C z^{\mU-1} (a+zc)^{\sigmA-\lambdA-1} (b+zd)^{\lambdA}\,\dd z.
\]
Substituting $z=-\frac ac u$ we come to
\[
 \cL\left[\mU,\lambdA;\begin{pmatrix} a&b\\c&d \end{pmatrix} \right]=
(-1)^\mU a^{\sigmA+\mU-\lambdA-\1} b^\lambdA c^{-\mU}\,
\frac{\Gamma^\C(\sigmA-\lambdA+\mU)}{\Gamma^\C(\mU)
\Gamma^\C(\sigmA-\lambdA)}\,\,
\hyp{2}{1}\left[\genfrac{}{}{0pt}{}{\mU,-\lambdA}{\sigmA-\lambdA+\mU};
\frac{ad}{bc}\right].
\]

\subsection*{Acknowledgements}
The work was supported by the grant FWF, P31591. I am grateful to \fbox{M.I.~Graev}, V.F.~Mol\-cha\-nov, V.A.~Spiridonov, and S.\'E.~Derkachov for discussions and references, I also thank the referees for useful comments.

\pdfbookmark[1]{References}{ref}
\LastPageEnding

\end{document}